\newtheorem{remark}{Remark}[section]
\def\ds{\displaystyle}
\def\e{{\varepsilon}}
\def\div{\mathop{\mathrm{div}}}
\title{DATA FUSION OF SATELLITE IMAGERY FOR GENERATION OF DAILY CLOUD-FREE IMAGES AT HIGH RESOLUTION LEVEL}
\author{Natalya Ivanchuk\thanks{Department of Computer Sciences and Applied Mathematics, National University of Water and Environmental Engineering, Soborna str., 11, Rivne, Ukraine, 33028; EOS Data Analytics Ukraine, Desyatynny lane, 5, 01001 Kyiv, Ukraine} ({\tt n.v.medvid@nuwm.edu.ua, natalya.ivanchuk@eosda.com}) \and 	
	Peter Kogut\thanks{Department of Differential Equations, Oles Honchar Dnipro National University, Gagarin av., 72, 49010 Dnipro,
Ukraine; EOS Data Analytics Ukraine, Gagarin av., 103a, Dnipro, Ukraine} ({\tt p.kogut@i.ua, peter.kogut@eosda.com}) \and Petro Martyniuk\thanks{Institute of Automation, Cybernetics and Computer Engineering, National University of Water and Environmental Engineering, Soborna str., 11, Rivne, Ukraine, 33028; EOS Data Analytics Ukraine, Gagarin av., 103a, Dnipro, Ukraine} ({\tt {p.m.martyniuk@nuwm.edu.ua, petro.martyniyk@eosda.com}} )}
\begin{document}

\maketitle

\begin{abstract}
	In this paper we discuss a new variational approach to the Date Fusion problem of multi-spectral satellite images from Sentinel-2 and MODIS that have been captureed at different resolution level and, arguably, on different days. The crucial point of our approach that the MODIS image is cloud-free whereas the images from Sentinel-2 can be corrupted by clouds or noise.
\end{abstract}

\begin{keywords}
Data fusion, variational approach, Sentinel-2, 
Moderate Resolution Imaging Spectroradiometer (MODIS), time series, 
image  restoration, data assimilation, constrained minimization problems.
\end{keywords}

\begin{AMS}
94A08, 49Q20, 49K20, 49J45.
\end{AMS}

\pagestyle{myheadings} \thispagestyle{plain} \markboth{N. Ivanchuk, P. Kogut, P. Martyniuk} {Data Fusion of Sateellite Imagery}

\section{Introduction}
\label{Sec_0}

Following \cite{Jo}, the Image Fusion is a process of combining the relevant information from a set of images of the same scene into a single image and the resultant fused image must be more
informative and complete than any of the input images. At the same time, when we deal with the data fusion problem for satellite images, such images, as a rule, are multi sensor, multi-modal,
multi-focus and multi temporal. Moreover, the data fusion problem is often exacerbated by cloud contamination. 
In some cloudy areas, researchers are fortunate to get 2--3 cloud-free satellite scenes per year, what is insufficient for many applications that require
dense temporal information, such as crop condition monitoring and phenology studies \cite{Gao,Wang}. In view of this, we can indicate the following general requirements for the satellite image fusion process:
\begin{itemize}
	\item[(i)] The fused image should preserve all relevant
	information from the input images;
	\item[(ii)] The image fusion should not introduce artifacts which
	can lead to wrong inferences.
\end{itemize}

In spite of the fact that the first requirement (item (i)) sounds rather vague, we give a precise treatment for it in Section~\ref{Sec 5}, making use of a collection of special constrained minimization problems (see \eqref{5.5aa}--\eqref{5.5ab}). As for the second item, it is important to emphasize that we are mainly interesting by satellite images that can be useful from agricultural point of view (land cover change mapping, crop
condition monitoring, yield estimation, and many others). Because of this  an important option in the image data fusion  is to preserve the precise geo-location of the existing crop fields and 
avoid an appearance of the so-called false contours and pseudo-boundaries on a given territory.

In this paper we mainly focus on the image fusion problem coming from two satellites --- Satnitel-2 and Moderate Resolution Imaging Spectroradiometer (MODIS). Since each band (spectral channel) in Sentinel images has $10$, $20$, or $60$ meters in pixel size, it gives an ideal spatial resolution for vegetation mapping at the field scale. Moreover, taking into account that Sentinel-2 has $3$--$5$ revisit cycle over the same territory, it makes its usage  for studying global biophysical processes, which allows to evolve rapidly during the growing season, essentially important. The unique problem that drastically restricts 
its practical implementation, is the fact that the satellite images, as a rule, are often contaminated by clouds, shadows, dust, and other atmospheric artifacts.

One of possible solutions for practical applications is to make use of frequent coarse-resolution data of the MODIS. Taking into account that the MODIS data can be delivered with the daily repeat cycle and 500-m surface reflectance, the core idea is to use the Sentinel and cloud-free MODIS data to generate synthetic 'daily' surface reflectance products at Sentinel spatial resolution \cite{KKU}.

The problem we consider in this paper can be briefly described as follows. We have a collection of multi-band images  $\left\{S_1, S_2,\dots, S_N:G_H\to\mathbb{R}^m\right\}$ from Sentinel-2 that were captured at some time instances $\left\{t_1,t_2,\dots,t_N\right\}$, respectively, and we have a MODIS image $M:G_L\to\mathbb{R}^n$ from some day $t_M$. It is assumed that all of these images are well co-registered with respect to the unique geographic location. We also suppose that the MODIS image is cloud-free and the day $t_M$ may does not coincide with any of time instances $\left\{t_1,t_2,\dots,t_N\right\}$. Meanwhile, the Sentinel images $\left\{S_2,\dots, S_N:G_H\to\mathbb{R}^m\right\}$ can be corrupted by clouds. The main question is how to generate a new synthetic 'daily' multi-band image of the same territory from the day $t_M$ at the Sentinel-2 spatial resolution $G_H$, utilizing for that the above mentioned data.

In principle, this problem is not new in the literature (see, for instance, \cite{Gao,Hilker,Ju,Masek,Wang}). For nowadays the spatial and temporal adaptive reflectance fusion model
(STARFM) is one of the most popular model where the idea to generate a new synthetic 'daily' satellite images at high resolution level has been realized. Basing on a deterministic weighting
function computed by spectral similarity, temporal difference, and spatial distance, this model (as many other generalizations) allows to predict daily surface
reflectance at high spatial resolution and MODIS temporal frequency. However, its performance essentially depends on the characteristic patch size of the landscape
and degrades somewhat when used on extremely heterogeneous
fine-grained landscapes \cite{Gao}. 

As it was mentioned in \cite{Bung1}, the majority image interpolation techniques are essentially based on certain assumptions regarding the data and the corresponding imaging devices. First, it is often assumed that the image
of high spatial resolution is a linear combination of the spectral channels with known weights \cite{Loncan}. Second, the loss of resolution is usually modeled as a linear operator which consists of a subsampled convolution with known kernel (point spread function). While both assumptions may be justified in some applications, it may be difficult to measure
or estimate the weights and the convolution kernel in a practical situation.

Instead of this,  we mainly focus on the variational approach to the satellite image data fusion. In order to eliminate the above mentioned restrictions,  we formulate the data fusion problem as the two-level optimization problem. At the first level, following a simple iterative procedure, we generate the so-called structural prototype for a synthetic Sentinel image from the given day $t_M$. The main characteristic feature of this prototype is the fact that, it must have a similar geometrical structure (namely, precise location of contours and field boundaries) to the nearest in time 'visible' Sentinel images, albeit they may have rather different intensities in all bands. Since the revisit cycle of Sentinel-2 is $3$--$5$ days, such prototype can be easily generated (for the details, we refer to Section~\ref{Sec 4}). 
In fact, we consider the above mentioned structural prototype as a reasonable input data for 'daily' prediction problem that we  formulate in the form of a special constrained minimization problem,  where the cost functional has a nonstandard growth  and the edge information for restoration of MODIS cloud-free images at the Sentinel resolution is accumulated both in the variable exponent of nonlinearity and in the directional image gradients which we derive from the predicted structural prototype. 

Our approach is based on the variational model in Sobolev-Orlicz space with a non-standard growth condition of the objective functional and on the assumption that, to a large extent, the image topology in each spectral channel is similar to the 
topographic map of its structural prototype. It is worth to emphasize that this model is considerably different from the variational model for P+XS image fusion that was proposed in \cite{Ballester}.  We discuss the
well thoroughness of the above approach and consistency of the corresponding variational problem, and we show that this problem admits a unique solution. We also derive some optimality conditions and supply our approach by results of numerical simulations with the real satellite images.

The paper is organized as follows. Section~\ref{Sec 1} contains some preliminaries, auxiliary results, and a non-formal statement of the data fusion problem.  In Section~\ref{Sec 3} we begin with some key assumptions and after we give a precise statement of the satellite image data fusion in the form of two-level constrained optimization problem with a nonstandard growth energy functional.
We show that, in principle, we can distinguish three different statements of the data fusion problem. Namely, it is the so-called restoration problem (when the main point is to restore the information in the cloud-corrupted zone for Sentinel images), the interpolation problem (when the day $t_M$ is intermediate for some time instances of $\left\{t_1,t_2,\dots,t_N\right\}$), and the so-called extrapolation problem (when $t_M>t_N$). We also discuss the specific of each of these problems and their rigorous mathematical description. Section~\ref{Sec 4} is devoted to the study of a particular model for the prediction of structural prototypes. We also illustrate this approach by some numerical simulations. 
 
Consistency issues of the proposed minimization problems, optimality conditions, and their substantiation are studied in Sections~\ref{Sec 5} and \ref{Sec 7}.
For illustration of this approach, we give in Section~\ref{Sec 8} some results of numerical experiments
with real satellite images. 
The experiments undertaken in this study confirmed the efficacy of the proposed method and revealed that it can acquire plausible visual performance and satisfactory quantitative accuracy for agro-scenes with rather complicated texture of background surface. In Appendix we give the main auxiliary results concerning the Orlicz spaces and the Sobolev-Orlicz spaces with variable exponent.

\section{Non-Formal Statement of the Problem}
\label{Sec 1}

Let $\Omega\subset\mathbb{R}^2$ be a bounded connected open set with  a sufficiently smooth boundary $\partial\Omega$ and nonzero Lebesgue measure. In majority cases $\Omega$ can be interpreted as a rectangle domain. Let $G_H$ and $G_L$ be two sample grids on $\Omega$ such that $G_H=\widehat{G}_H\cap\Omega$ and $G_H=\widehat{G}_H\cap\Omega$, where
\begin{align*}
\widehat{G}_H&=\left\{(x_i,y_j)\left|  
\begin{array}{c}
x_1=x_H,\ 
x_i=x_1+\Delta_{H,x}(i-1),\ i=1,\dots,N_x,\\
y_1=y_H,\ y_j=y_1+\Delta_{H,y}(j-1),\  j=1,\dots,N_y,
\end{array}
\right.\right\},\\[1ex]
\widehat{G}_L&=\left\{(x_i,y_j)\left|  
\begin{array}{c}
x_1=x_L,\ 
x_i=x_1+\Delta_{L,x}(i-1),\ i=1,\dots,M_x,\\
y_1=y_L,\ 
y_j=y_1+\Delta_{L,y}(j-1),\  j=1,\dots,M_y,
\end{array}
\right.\right\},
\end{align*}
with some fixed points $(x_H,y_H)$ and $(x_L,y_L)$. Here, it is assumed that $N_x>> M_x$ and $N_y>> M_y$. 

Let $[0,T]$ be a given time interval. Normally, by $T$ we mean a number of days. Let $t_M$ and $\left\{t_k\right\}_{k=1}^N$ be  moments in time (particular days) such that $0\le t_1< t_2<\dots< t_N\le T$ and  $t_1<t_M<T$. Let $\left\{S_1, S_2,\dots, S_N:G_H\to\mathbb{R}^m\right\}$ be a collection of multispectral images of some territory, delivered 
from Sentinel-2, that were taken at time instances $t_1,t_2,\dots,t_N$, respectively. Hereinaftre, $m=13$ and it stands for the number of spectral channels in images from Sentinel-2.  Let $M:G_L\to\mathbb{R}^n$, with $n=6$, be a MODIS image of the same territory and this image has been captured at time $t=t_M$.  It is assumed that:
\begin{enumerate}
	\item[(i)] The Sentinel-2 images $S_k:G_H\to\mathbb{R}^m$, $k=2,\dots,N$ can be corrupted by some noise, clouds and blur. However, the first one 
	$S_1:G_H\to\mathbb{R}^m$ is a cloud-free image; 
	\item[(ii)] For further convenience we divide the set of all bands for Sentinel images onto
	two parts $J_1$ and $J_2$ with $\mathrm{dim}(J_1)=6$ and $\mathrm{dim}(J_2)=7$, where  
	\begin{table}[h!]
		\begin{center}
		\caption{Spectral channels of $J_1$}
		\label{Tab 1.1}
		 \begin{tabular}{|l|c|c|l|}
		 	\hline
		 	\textbf{Band} & \textbf{Resolution} & \textbf{Central Wavelenth} & \textbf{Description}\\
		 	\hline
		 	$B_2$ & $10$ m & $490$ m & Blue\\
		 	$B_3$ & $10$ m & $560$ m & Green\\
		 	$B_4$ & $10$ m & $665$ m & Red\\
		 	$B_{8a}$ & $20$ m & $865$ m & Visible and Near Infrared (VNIR)\\
		 	$B_{11}$ & $20$ m & $1610$ m & Short Wave Infrared (SWIR)\\
		 	$B_{12}$ & $20$ m & $2190$ m & Short Wave Infrared (SWIR)\\
		 	\hline
		 \end{tabular}
	    \end{center}
	\end{table}
	
	\begin{table}[h!]
		\begin{center}
			\caption{Spectral channels of $J_2$}
			\label{Tab 1.2}
			\begin{tabular}{|l|c|c|l|}
				\hline
				\textbf{Band} & \textbf{Resolution} & \textbf{Central Wavelenth} & \textbf{Description}\\
				\hline
				$B_1$ & $60$ m & $443$ m & Ultra Blue (Coastal and Aerosol)\\
				$B_5$ & $20$ m & $705$ m & Visible and Near Infrared (VNIR)\\
				$B_6$ & $20$ m & $740$ m & Visible and Near Infrared (VNIR)\\
				$B_7$ & $20$ m & $783$ m & Visible and Near Infrared (VNIR)\\
				$B_8$ & $10$ m & $842$ m & Visible and Near Infrared (VNIR)\\
				$B_9$ & $60$ m & $940$ m & Short Wave Infrared (SWIR)\\
				$B_{10}$ & $60$ m & $1375$ m & Short Wave Infrared (SWIR)\\
				\hline
			\end{tabular}
		\end{center}
	\end{table}
	
\item[(iii)] Each spectral channel of the MODIS image $M=\left[M_1,M_2,\dots,M_6\right]:G_L\to\mathbb{R}^6$ has the similar spectral characteristics to the corresponding channel of $J_1$-group $\left\{B_2, B_3, B_4, B_{8a}, B_{11}, B_{12}\right\}$, respectively;

\item[(iv)] The principle point is that the MODIS image $M:G_L\to\mathbb{R}^6$ is visually sufficiently clear and does not corrupted by clouds or its damage zone can be neglected; 

\item[(v)] The MODIS image $M:G_L\to\mathbb{R}^6$  and the  images $\left\{S_1, S_2,\dots, S_N:G_H\to\mathbb{R}^m\right\}$ from Sentinel-2 are rigidly co-registered. This means that the MODIS image after arguably some affine transformation and 
each Sentinel images after the resampling to the grid with low resolution $G_L$, could be successfully matched according to the unique geographic location. 

In practice, the co-registration procedure can be realized using, for instance, the open-source LSReg v2.0.2 software \cite{Roy, Yan} that has been used in a number of recent studies \cite{Frantz, Roy1}, or the rigid co-registration approach that has been recently developed in \cite{Kogut1,Kogut2}. However, in both cases, in order to find an appropriate affine transformation, we propose to apply this procedure not to the original images, but rather to the contour's map of their spectral energies
$Y_{M}:G_L\to \mathbb{R}$ and $Y_{S_j}:G_H\to\mathbb{R}$, 
where the last ones should be previously resampled to the grid of the low resolution $G_L$. Here,
\begin{align*}
	Y_{M}(z)&:=\alpha_1 M_1(z)+\alpha_2 M_2(z)+\alpha_3 M_3(z),\quad\forall\,z=(x,y)\in G_L,\\
	Y_{S_i}(z)&:=\alpha_1 S_{i,1}(z)+\alpha_2 S_{i,2}(z)+\alpha_3 S_{i,3}(z),\quad\forall\,z=(x,y)\in G_H
\end{align*}
with $\alpha_1=0.114$, $\alpha_2=0.587$, and $\alpha_3=0.299$.
\end{enumerate}

\begin{remark}
	Let us mention that in the case of digital images, the only accessible information is a sampled and quantized version of $I:\Omega\to\mathbb{R}^m$, $I(x_i,y_j)$,  where $\left\{(x_i,y_j)\in\Omega\right\}$ is a set of discrete points  and for each spectral channel $k=1,\dots,m$, $I_k(x_i,y_j)$ belongs in fact to a discrete set of values, $0,1,\dots, 255$ in many cases. Due to the Shannon's theory, it is plausible to assume that $I_k$ is recoverable at any point $(x,y)\in\Omega$ from
	the samples $I_k(x_i,y_j)$. So, we may assume that the image $I$ is known in a continuous domain, up to
	the quantization noise (see \cite{Ballester}). However, in practice, such reconstruction is not a trivial problem. 
\end{remark}

\subsection{Functional Spaces}

Let us recall some useful notations. 
For vectors $\xi\in\mathbb{R}^2$ and $\eta\in\mathbb{R}^2$, $\left(\xi,\eta\right)=\xi^t\eta$ denotes the standard vector inner product in $\mathbb{R}^2$, where $^t$ stands for the transpose operator. The norm $|\xi|$ is the Euclidean norm given by $|\xi|=\sqrt{(\xi,\xi)}$. Let $\Omega\subset\mathbb{R}^2$ be a bounded open set with a Lipschitz boundary $\partial\Omega$. For any subset $E\subset\Omega$ we
denote by $|E|$ its $2$-dimensional Lebesgue measure
$\mathcal{L}^2(E)$. Let $\overline{E}$ denote the closure of $E$, and $\partial E$ stands for its boundary. We define the characteristic function $\chi_E$ of $E$ by
\[
\chi_E(x):=\left\{
\begin{array}{ll}
1,&\ \text{for }\ x\in E,\\
0,&\ \text{otherwise}.
\end{array}
\right.
\]

Let $X$ denote a real Banach space with norm $\|\cdot\|_X$, and let $X^\prime$ be its dual. Let $\left<\cdot,\cdot\right>_{X^\prime;X}$ be the duality form on $X^\prime\times X$. By $\rightharpoonup$ and $\stackrel{\ast}{\rightharpoonup}$ we denote the weak and weak$^\ast$ convergence in normed spaces, respectively.

For given $1\le p\le +\infty$, the space $L^p(\Omega;\mathbb{R}^2)$ is defined by
$$L^p(\Omega;\mathbb{R}^2)=\left\{f:\Omega\rightarrow \mathbb{R}^2\ :\ \|f\|_{L^p(\Omega;\mathbb{R}^2)}<+\infty\right\},
$$
where
$\|f\|_{L^p(\Omega;\mathbb{R}^2)}=\left(\int_{\Omega} \lvert f(x)\rvert^p\,dx\right)^{1/p}$ for $1\le p<+\infty$.
The inner product of two functions $f$ and $g$ in $L^p(\Omega;\mathbb{R}^2)$ with $p\in[1,\infty)$ is given by
$$
\left(f,g\right)_{L^p(\Omega;\mathbb{R}^2)}=\int_{\Omega} \left(f(x),g(x)\right)\,dx=\int_{\Omega} \sum_{k=1}^2 f_k(x) g_k(x)\,dx.
$$

We denote by $C_c^\infty(\mathbb{R}^2)$ a locally convex space of all infinitely differentiable functions with compact support in $\mathbb{R}^2$.
We recall here some functional spaces that will be used throughout this
paper. We define the Banach space $H^{1}(\Omega)$ as the closure of $C^\infty_c(\mathbb{R}^2)$ with respect to the norm
$$\|y\|_{H^{1}(\Omega)}=\left(\int_{\Omega} \left( y^2+\lvert\nabla y\rvert^2\right)\,dx\right)^{1/2}.$$
We denote by $\left(H^{1}(\Omega)\right)^\prime$ the dual space of $H^{1}(\Omega)$.  

Given a real Banach space $X$, we will denote by $C([0,T];X)$ the space of all continuous functions from $[0,T]$ into $X$. We recall that a function $u:[0,T]\to X$ is said to be Lebesgue measurable if there exists a sequence $\left\{u_k\right\}_{k\in\mathbb{N}}$ of step functions (i.e., $u_k=\sum_{j=1}^{n_k} a_j^k \chi_{A_j^k}$ for a finite number $n_k$ of Borel subsets $A_j^k\subset[0,T]$ and with $a_j^k\in X$) converging to $u$ almost everywhere with respect to the Lebesgue measure in $[0,T]$.

Then for $1\le p<\infty$, $L^p(0,T;X)$ is the space of all measurable functions $u:[0,T]\to X$ such that
\[
\|u\|_{L^p(0,T;X)}=\left(\int_0^T \|u(t)\|^p_X\,dt\right)^{\frac{1}{p}}<\infty,
\]
while $L^\infty(0,T;X)$ is the space of measurable functions such that
\[
\|u\|_{L^\infty(0,T;X)}=\sup_{t\in[0,T]}\|u(t)\|_X<\infty.
\]
The full presentation of this topic can be found in \cite{Lions_D}.

Let us recall that, for $1\le p\le \infty$, $L^p(0,T;X)$ is a Banach space. Moreover, if $X$ is separable and $1\le p<\infty$, then the dual space of $L^p(0,T;X)$ can be identified with $L^{p^\prime}(0,T;X^\prime)$.

For our purpose $X$ will mainly be either the Lebesgue space $L^p(\Omega)$ or $L^p(\Omega;\mathbb{R}^2)$ or the Sobolev space $W^{1,p}(\Omega)$ with $1\le p<\infty$.
Since, in this case, $X$ is separable, we have that $L^p(0,T;L^p(\Omega))=L^p(Q_T)$ is the ordinary Lebesgue space defined in $Q_T=(0,T)\times\Omega$. As for the space $L^p(0,T;W^{1,\alpha}(\Omega))$ with $1\le\alpha,p<+\infty$, it consists of all functions $u:[0,T]\times\Omega\to\mathbb{R}$ such that $u$ and $|\nabla u|$ belongs to $L^p(0,T;L^\alpha(\Omega))$.   Moreover,
\[
\left(\int_0^T\left[\left(\int_\Omega |u|^\alpha\,dx\right)^\frac{p}{\alpha}+\left(\int_\Omega |\nabla u|^\alpha\,dx\right)^\frac{p}{\alpha}\right]\,dt\right)^\frac{1}{p}
\]
defines the norm in $L^p(0,T;W^{1,\alpha}(\Omega))$.

\subsection{Topographic Maps and Geometry of Satillite Multispectral Images}
\label{Subsec_1}
Following the main principle of the Mathematical Morphology, a scalar image $u:\Omega\rightarrow \mathbb{R}$ is a representative of an equivalence class of images $v$ obtained from $u$ via a contrast change, i.e., $v=F(u)$, where $F$ is a continuous strictly increasing function. Under this assumption, a scalar image can be characterized by its upper (or lower) level sets $Z_\lambda(u)=\left\{x\in\Omega\ :\ u(x)\ge\lambda\right\}$ (resp. $Z^\prime_\lambda(u)=\left\{x\in\Omega\ :\ u(x)\le\lambda\right\}$). Moreover, each image can 
be recovered from its level sets by the reconstruction formula
$u(x)=\sup\left\{\lambda\ :\ x\in Z_\lambda(u)\right\}$. 
Thus, according to the Mathematical Morphology Doctrine,
the reliable information in the image contains in the level
sets, independently of their actual levels (see \cite{CCM_99} for the details). So, we can suppose that the entire geometric information about a scalar image is contained in those level sets.

In order to describe the level sets by their boundaries, $\partial Z_\lambda(u)$, we assume that $u\in W^{1,1}(\Omega)$, where $W^{1,1}(\Omega)$ stands for the standard Sobolev space of all functions $u\in L^1(\Omega)$ with respect to the norm
\[
\|u\|_{W^{1,1}(\Omega)}=\|u\|_{L^1(\Omega)}+\|\nabla u\|_{L^1(\Omega)^2},
\]
where the distributional gradient $\nabla u=\left(\frac{\partial u}{\partial x_1}, \frac{\partial u}{\partial x_2}\right)$ is represented as follows
\[
\int_{\Omega}u\frac{\partial\phi}{\partial x_i}\,dx=-\int_{\Omega}\phi \frac{\partial u}{\partial x_i}\,dx,\quad\forall\,\phi\in C^\infty_0(\Omega),\ i=1,2.
\]

It was proven in \cite{Ambrosio} that if $u\in W^{1,1}(\Omega)$ then its upper level sets $Z_\lambda(u)$ are sets of finite perimeter. So, the boundaries $\partial Z_\lambda(u)$ of
level sets can be described by a countable family of Jordan
curves with finite length, i.e., by continuous maps from the circle into the plane $\mathbb{R}^2$ without crossing points. As a result, at almost all points of almost all level sets of $u\in W^{1,1}(\Omega)$ we may define a unit normal vector $\theta(x)$. This vector field formally satisfies the following relations
\[
\left(\theta, \nabla u\right) = |\nabla u|\quad\text{and}\quad
|\theta|\le 1 \ \text{a.e. in $\Omega$}.
\]
In the sequel, we will refer to the vector field $\theta$ as the vector field of unit normals to the topographic map of a function $u$. So, we can associate $\theta$ with the geometry of the scalar image $u$.

In the case of multi-band satellite images $I:\Omega\to \mathbb{R}^m$, we will impose further the following assumption: $I\in W^{1,1}(\Omega;\mathbb{R}^m)$ and each spectral  channel of a given image $I$ has the same geometry. We refer to \cite{CCM_02} for the experimental discussion.

\begin{remark}
	\label{Rem 3.1}
	In practice, at the discrete level, the vector field $\theta(x,y)$ can be defined by the rule $\theta(x_i,y_j)=\frac{\nabla u(x_i,y_j)}{|\nabla u(x_i,y_j)|} $ when  $\nabla u(x_i,y_j)\ne 0$, and $\theta=0$ when $\nabla u(x_i,y_j)=0$. However, as was mentioned in \cite{Ballester}, a better choice for $\theta(x,y)$ would be to
	compute it as the ration $\frac{\nabla U(t,\cdot)}{|\nabla U(t,\cdot)|}$ for some
	small value of $t > 0$, where $U(t,x,y)$ is a solution of the following initial-boundary value problem with $1D$-Laplace operator in the principle part
	\begin{align}
		\label{2.10}
		&\frac{\partial U}{\partial t}=\mathrm{div}\,\left(\frac{\nabla U}{|\nabla U|}\right),\quad   t\in(0,+\infty),\ (x,y)\in\Omega,\\
		\label{2.11}
		&U(0,x,y)= u(x,y),\quad (x,y)\in\Omega,\\
		\label{2.12}
		&\frac{\partial U(0,x,y)}{\partial\nu}=0,\quad t\in(0,+\infty),\ (x,y)\in\partial\Omega.
	\end{align}
	
	As a result, for any $t>0$, there can be found a vector field 
	$$
	\xi\in L^\infty(\Omega;\mathbb{R}^2)\ \text{ with }\  \|\xi(t)\|_{L^\infty(\Omega;\mathbb{R}^2)}\le 1
	$$ 
	such that
	\begin{equation}
		\label{2.13}
		\left(\xi(t), \nabla U(t,\cdot)\right)=|\nabla U(t,\cdot)|\ \text{in }\ \Omega,\quad 
		\xi(t)\cdot \nu=0\ \text{on }\ \partial\Omega,
	\end{equation}
	and $	U_t(t,x,y)=\mathrm{div}\, \xi(t,x,y)$ in the sense of distributions on $\Omega$ for a.a. $t>0$.
	
	We notice that in the framework of this procedure, for small value of $t > 0$, we do not distort the geometry of the function $u(x,y)$ in an essential way. Moreover, it can be shown that
	this regularization of the vector field $\theta(x,y)=\frac{\nabla U(x,y)}{|\nabla U(x,y)|}$ satisfies condition 
	$\mathrm{div}\,\theta \in L^2(\Omega)$.
\end{remark}

\subsection{Texture Index of a Gray-Scale Image}
Let $u\in C([0,T]; L^2(\Omega))$ be a given function. For each $t\in[0,T]$, we associate the real-valued mapping
$u(t,\cdot):\Omega\mapsto\mathbb{R}$ with a gray-scale image, and the mapping $u:(0,T)\times\Omega\rightarrow \mathbb{R}$ with an optical flow. A widely-used way to smooth $u(t,\cdot)$ is by calculating the convolution 
\[
u_\sigma(t,x):=\left(G_\sigma\ast \widetilde{u}(t,\cdot)\right)(x)=\int_{\mathbb{R}^2} G_\sigma(x-y) \widetilde{u}(t,y)\,dy,
\]
where $\widetilde{u}$ denotes zero extension of $u$ from $Q_T=(0,T)\times\Omega$ to $\mathbb{R}^3$, and 
$G_\sigma$ stands for the two-dimentional Gaussian of width (standard deviation) $\sigma>0$:
\[
G_\sigma(x)=\frac{1}{2\pi\sigma^2}\exp\left(-\frac{|x|^2}{2\sigma^2}\right).
\]

\begin{definition}
	\label{Def 1.2}
	We say that a function $p_u:(0,T)\times\Omega\to\mathbb{R}$ is the  texture index of a given optical flow $u:(0,T)\times\Omega\rightarrow \mathbb{R}$ if
	it is defined by the rule
	\begin{equation}
		\label{1.6}
		p_u(t,x):=1+g\left(\frac{1}{h}\int_{t-h}^t\left|\left(\nabla G_\sigma\ast \widetilde{u}(\tau,\cdot)\right) (x)\right|^2\,d\tau\right),\ \forall\,(t,x)\in Q_T,
	\end{equation}
where
$g{:}[0,\infty)\rightarrow (0,\infty)$ is the edge-stopping function which we take in the form of the Cauchy law $
g(s)=\frac{a}{a+s}$ with $a>0$ small enough, and $h>0$ is a small  positive value.
\end{definition}

Since $G_\sigma\in C^\infty(\mathbb{R}^2)$, it follows from \eqref{1.6} and absolute continuity of the Lebesgue integral that $1<p_u(t,x)\le 2$ in $Q_T$ and $p_u\in C^1([0,T];C^\infty(\mathbb{R}^2))$ even if $u$ is just an absolutely integrable function in $Q_T$. Moreover, for each $t\in[0,T]$, $p_u(t,x)\approx 1$ in those places of $\Omega$ where some edges or discontinuities are present in the image $u(t,\cdot)$, and $p_u(t,x)\approx 2$ in places where $u(t,\cdot)$ is smooth or contains homogeneous features. In view of this, $p_u(t,x)$ can be interpreted as a characteristic of the sparse texture of the function $u$ that can change with time.
The following result plays a crucial role in the sequel (for the proof we refer to \cite[Lemma 2.1]{Kogut2022}).
\begin{lemma}
	\label{Lemma 1.7}
	Let $u\in C([0,T];L^2(\Omega))$ be a measurable function  extended by zero outside of $Q_T$. Let 
	$$
	p_{u}=1+g\left(\frac{1}{h}\int_{t-h}^t\left|\left(\nabla G_\sigma\ast \widetilde{u}(\tau,\cdot)\right)\right|^2\,d\tau\right)
	$$  
	be the corresponding  texture index. Then there exists a constant $C>0$ depending on $\Omega$, $G$, and $ \|u\|_{C([0,T];L^2(\Omega))}$ such that
	\begin{gather}
		\label{1.7a}
		\alpha:=1+\delta\le p_{u}(t,x) \le \beta:=2,\quad \forall\,(t,x)\in Q_T, \\
		\label{1.7b}
		p_{u}\in C^{0,1}(Q_T),\ 
			|p_{u}(t,x)-p_{u}(s,y)|\le C\left(|x-y|+|t-s|\right),\ \forall\,(t,x),(s,y)\in \overline{Q_T},
	\end{gather}	
	where
	\begin{align}		
	\label{1.7d}
	\delta&={ah}\left[{ah+\|G_{\sigma}\|^2_{C^1(\overline{\Omega-\Omega})} |\Omega|\|u\|^2_{L^2(0,T;L^2(\Omega))}}\right]^{-1},\\ \|G_{\sigma}\|_{C^1(\overline{\Omega-\Omega})}&:=\max\limits_{z=x-y\atop x\in\overline{\Omega}, y\in\overline{\Omega}} \Big[|G_\sigma(z)|+|\nabla G_\sigma(z)|\Big]	\label{1.7e}
	=\frac{e^{-1}}{\left(\sqrt{2\pi}\sigma\right)^2}\left[1+\frac{1}{\sigma^2}\mathrm{diam}\,\Omega\right].
	\end{align}
\end{lemma}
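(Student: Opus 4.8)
The plan is to write $p_u=1+g\circ F$, where $g(s)=a/(a+s)$ is the Cauchy edge-stopping function and
\[
F(t,x):=\frac1h\int_{t-h}^t\bigl|\Phi(\tau,x)\bigr|^2\,d\tau,\qquad
\Phi(\tau,x):=\bigl(\nabla G_\sigma\ast\widetilde{u}(\tau,\cdot)\bigr)(x),
\]
and to reduce every assertion to elementary estimates on $\Phi$ and $F$. The two structural facts about $g$ that I would exploit are that it is decreasing with $0<g(s)\le g(0)=1$ on $[0,\infty)$, and that it is Lipschitz there, since $|g'(s)|=a/(a+s)^2\le 1/a$.

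For the pointwise bounds \eqref{1.7a}, since $\widetilde{u}(\tau,\cdot)$ vanishes outside $\Omega$ for $\tau\in(0,T)$ and vanishes identically for $\tau\notin[0,T]$, I would write $\Phi(\tau,x)=\int_\Omega\nabla G_\sigma(x-y)\,u(\tau,y)\,dy$ and apply Cauchy--Schwarz. For $x,y\in\overline{\Omega}$ one has $x-y\in\overline{\Omega-\Omega}$, so $|\nabla G_\sigma(x-y)|\le\|G_\sigma\|_{C^1(\overline{\Omega-\Omega})}$, whence
\[
|\Phi(\tau,x)|^2\le \|G_\sigma\|^2_{C^1(\overline{\Omega-\Omega})}\,|\Omega|\,\|u(\tau,\cdot)\|^2_{L^2(\Omega)}.
\]
Integrating over $(t-h,t)$ and enlarging the domain of integration to $(0,T)$ gives $F(t,x)\le \tfrac1h\|G_\sigma\|^2_{C^1(\overline{\Omega-\Omega})}|\Omega|\,\|u\|^2_{L^2(0,T;L^2(\Omega))}=:K/h$. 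The upper bound $p_u\le 2$ is then immediate from $g\le 1$, while the monotonicity of $g$ forces $g(F)\ge g(K/h)=ah/(ah+K)=\delta$, which is precisely \eqref{1.7d}; this yields the lower bound $p_u\ge 1+\delta$. The explicit expression \eqref{1.7e} then follows from the identity $|G_\sigma(z)|+|\nabla G_\sigma(z)|=G_\sigma(z)\bigl(1+|z|/\sigma^2\bigr)$ and the bound $|z|\le\mathrm{diam}\,\Omega$ valid on $\overline{\Omega-\Omega}$, a routine computation with the explicit Gaussian kernel.

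For the Lipschitz estimate \eqref{1.7b}, the $1/a$-Lipschitz continuity of $g$ reduces matters to showing that $F$ is Lipschitz jointly in $(t,x)$, which I would establish by splitting $|F(t,x)-F(s,y)|\le|F(t,x)-F(t,y)|+|F(t,y)-F(s,y)|$. For the spatial increment I write $|\Phi(\tau,x)|^2-|\Phi(\tau,y)|^2$ as a difference of squares, bound its sum factor by the pointwise estimate above, and bound its difference factor through the Lipschitz continuity of $\nabla G_\sigma$ on the compact set $\overline{\Omega-\Omega}$ (available since $G_\sigma\in C^\infty$), again via Cauchy--Schwarz; integrating in $\tau$ gives $|F(t,x)-F(t,y)|\le C_1|x-y|$. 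For the temporal increment I set $\psi(\tau):=|\Phi(\tau,x)|^2$, note that $\psi$ is bounded by $P:=\|G_\sigma\|^2_{C^1(\overline{\Omega-\Omega})}|\Omega|\,\|u\|^2_{C([0,T];L^2(\Omega))}$ and vanishes outside $[0,T]$, and invoke the sliding-window identity
\[
\int_{t-h}^t\psi-\int_{s-h}^s\psi=\int_s^t\psi-\int_{s-h}^{t-h}\psi,
\]
so that $|F(t,x)-F(s,x)|\le\tfrac1h\bigl(P|t-s|+P|t-s|\bigr)=\tfrac{2P}{h}|t-s|$. Combining the two increments delivers \eqref{1.7b} with $C=\tfrac1a\max\{C_1,2P/h\}$ and, in particular, $p_u\in C^{0,1}(Q_T)$.

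I expect the temporal increment to be the main (though mild) obstacle: it is the only place where the sliding time-average must be handled, and one must be careful that the window $(t-h,t)$ may protrude beyond $[0,T]$, where the zero extension $\widetilde{u}$ guarantees $\psi\equiv 0$ and keeps all integrals well defined. The remaining effort is bookkeeping --- tracking constants so as to recover exactly the expression for $\delta$ in \eqref{1.7d} and confirming the closed form \eqref{1.7e} for the Gaussian norm --- but this is routine once the estimates above are in place.
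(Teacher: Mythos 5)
The paper does not prove this lemma itself --- it refers the reader to \cite[Lemma 2.1]{Kogut2022} --- so there is no in-paper argument to compare against; your proof is correct and is precisely the standard argument one would expect that reference to contain: Cauchy--Schwarz together with the monotonicity and the bound $0<g\le 1$ for \eqref{1.7a} and \eqref{1.7d}, and the $1/a$-Lipschitz continuity of $g$ combined with the smoothness of $\nabla G_\sigma$ on $\overline{\Omega-\Omega}$ and the sliding-window identity for \eqref{1.7b}. The only point worth flagging is \eqref{1.7e}: your (correct) identity $|G_\sigma(z)|+|\nabla G_\sigma(z)|=G_\sigma(z)\left(1+|z|/\sigma^2\right)$ leads to the bound $\frac{1}{2\pi\sigma^2}\left(1+\mathrm{diam}\,\Omega/\sigma^2\right)$ rather than to the paper's stated value with the factor $e^{-1}$, which is not an identity in general; since any upper bound on $\|G_{\sigma}\|_{C^1(\overline{\Omega-\Omega})}$ only decreases $\delta$ and hence preserves \eqref{1.7a}, this is a defect of the constant as stated in the lemma, not of your argument.
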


\section{Data Fusion Problem. Main Requirements to the Formal Statement}
\label{Sec 3}

Let $\left\{S_1, S_2,\dots, S_N:G_H\to\mathbb{R}^m\right\}$, with $m=13$, be a collection of multispectral images of some territory
from Sentinel-2 that were taken at time instances $\left\{t_1,t_2,\dots,t_N\right\}\subset[0,T]$, respectively. We admit that these images can be corrupted because of
poor weather conditions, such as rain, clouds, fog, and dust conditions. Typically, the measure of degradation of optical satellite images can be such that we cannot even rely on some reliability of pixel's values inside the damaged regions for each of spectral channels. As a result, some subdomains of such images become absolutely invisible. Let $\left\{D_1, D_2,\dots, D_N\right\}\subset 2^\Omega$ be a collection of damage regions for the corresponding Sentinel-images. So, in fact, we deal with the set of images 
\[
\left\{S_i: G_H\setminus D_i\to\mathbb{R}^m,\ i=1,\dots,N\right\}.
\]

Let $M:G_L\to\mathbb{R}^n$, with $n=6$, be a MODIS image of the same territory and this image has been captured at time $t=t_M\in (t_1,T)$.

Before proceeding further, we begin with the following assumption:
\begin{enumerate}
	\item[(a)] $D_1=\emptyset$ and the damage zones for the rest images from Sentinel-2 are such that each $D_i$, $i=2,\dots,N$, is a measurable closed subset of $\Omega$ with property $\mathcal{L}^2(D_i)\le 0.6\,\mathcal{L}^2(\Omega)$, where $\mathcal{L}^2(D_i)$ stands for the $2$-D Lebesgue measure of $D_i$; 
	
	\item[(b)] The MODIS image $M:G_L\to\mathbb{R}^n$ is assumed to be cloud-free;
	
	\item[(c)] The images $M:G_L\to\mathbb{R}^6$  and   $\left\{S_1, S_2,\dots, S_N:G_H\to\mathbb{R}^m\right\}$ are rigidly co-registered. This means that the MODIS image after arguably some affine transformation and 
	each Sentinel images after the resampling to the grid with low resolution $G_L$, could be successfully matched according to the unique geographic location;
	
	\item[(d)] There exists an impulse response $\mathcal{K}$ such that, for any multi-band image with high resolution $I:G_H\rightarrow \mathbb{R}^m$, its resampling to the grid with low resolution $G_L$ can be expressed as follows
	\[
	I(x_i,y_j)=\left[\mathcal{K}\ast I\right](x_i,y_j),\quad \forall\,i=1,\dots,M_x,\ \forall\,j=1,\dots,M_y,
	\]
	where $\mathcal{K}\ast I$ stands for the convolution operator. For instance, setting $\mathcal{K}=[k_{p,q}]_{p, q=1,\dots,K}$, we have
	\[
	\left[\mathcal{K}\ast I\right](x_i,y_j)=\sum_{p=1}^K  \sum_{q=1}^K k_{p,q} I(x_{i-p+1},y_{j-q+1})
	\]	
	provided $I(x,y)=0$ if $(x,y)\not\in\Omega$. 
	 In majority of cases, it is enough to set up $k_{p,q}=\frac{1}{K^2}$, $\forall\,p,q=1,\dots,K$, with an appropriate choice of $K\in\mathbb{N}$. However, if we deal with satellite images containing some agricultural areas with medium sides fields of various shapes, then the more efficient way for the choice of kernel $\mathcal{K}=[k_{p,q}]_{p, q=1,\dots,K}$ is to define it using the weight coefficients of the Lanczos interpolation filters.
\end{enumerate}	

For our further analysis, we make use of the following notion. We say that the multi-band images $\left\{ \widehat{S}_i:G_H\rightarrow \mathbb{R}^m\right\}_{i=1}^N$
are structural prototypes of the corresponding cloud-corrupted ones $\left\{S_i: G_H\setminus D_i\to\mathbb{R}^m\right\}_{i=1}^N$ if they are defined as follows:
\begin{equation}
\label{2.3} 
\begin{split}
\widehat{S}_{1,k}(z)&=S_{1,k}(z),\\ 
\widehat{S}_{i,k}(z)&=\left\{
\begin{array}{ll}
S_{i,k}(z), & z\in G_H\setminus D_i,\\
\gamma_{i,k} \widehat{S}_{i-1,k}(z), & z\in G_H\cap D_i,
\end{array}
\right\},\ i=2,\dots,N,\ k=1,\dots,m,
\end{split}
\end{equation}
where
\[
\gamma_{i,k}=\frac{\Big[\chi_{\Omega\setminus D_i}S_{i,k}\Big]\cdot\Big[\chi_{\Omega\setminus D_i}\widehat{S}_{i-1,k}\Big]}{\big\|\chi_{\Omega\setminus D_i}\widehat{S}_{i-1,k}\big\|^2_{\mathcal{L}(\mathbb{R}^2,\mathbb{R}^2)}},
\]
$A\cdot B$ stands for the scalar product of two matrices $A$ and $B$, and $\|\cdot\|_{\mathcal{L}(\mathbb{R}^2,\mathbb{R}^2)}$ denotes the Euclidean norm of a matrix.  
Moreover, each structural prototype $\widehat{S}_i:G_H\rightarrow \mathbb{R}^m$ is rigidly related to the corresponding day $t_i$ when the image $S_i: G_H\setminus D_i\to\mathbb{R}^m$ had been captured.

\begin{remark}
	\label{Rem 2.3}
As follows from the rule \eqref{2.3}, this iterative procedure should be applied to each spectral channel of all multi-band images from Sentinel-2. Since the revisit time for Sentinel-2 is $3$--$5$ days and the collection of images $\left\{S_i: G_H\setminus D_i\to\mathbb{R}^m\right\}_{i=1}^N$ is rigidly co-registered, it follows from \eqref{2.3} that the structural prototypes $\left\{ \widehat{S}_i:G_H\rightarrow \mathbb{R}^m\right\}_{i=1}^N$ are also well co-registered and they have the similar topographic maps with respect to their precise space location, albeit some false contours can appear along the boundaries of the damage zones $D_i$. In fact, in order to avoid the appearance of the false contours, the weight coefficients $\gamma_{i,k}$ have been introduced.
\end{remark}

Since the MODIS image has been captured at a time instance $t_M\in  (t_1, T)$, we can have three possible cases: 
\begin{description}
	\item[(A1)] there exists an index $i^\ast\in\left\{1,2,\dots,N\right\}$ such that $t_M=t_{i^\ast}$;
	
	\item[(A2)] there exists an index $i^\ast\in\left\{1,2,\dots,N-1\right\}$ such that 
	$t_{i^\ast}<t_M<t_{i^\ast+1}$;
	
	\item[(A3)] $t_N<t_M<T$.
\end{description}

In view of this, we will distinguish three different statements of the data fusion problem:
\begin{description}
	\item[Case (A1)] (Restoration Problem) The problem (A1) consists in restoration of the damaged multi-band optical image $S_{i^\ast}:G_H\setminus D_{i^\ast}\to\mathbb{R}^m$ using result of its fusion with the cloud-free MODIS image $M:G_L\to\mathbb{R}^6$ of the same territory. It means that, we have to create a new image $S^{rest}_{i^\ast}:G_H\to\mathbb{R}^m$, which would be well defined on the entire grid $G_H$, such that
	\begin{gather}
	\label{2.4a}
	S^{rest}_{i^\ast}(z)=S_{i^\ast}(z),\quad \forall\,z=(x,y)\in G_H\setminus D_{i^\ast},\\
	\label{2.4b}
	\sum\limits_{z\in G_L\cap D_{i^\ast}}\Big( \left(\mathcal{K}\ast S^{rest}_{i,k}\right)(z)-M_{k}(z)\Big)^2=\inf\limits_{I\in\mathcal{I}} \sum\limits_{z\in G_L\cap D_{i^\ast}}\Big( \left(\mathcal{K}\ast I\right)(z)-M_{k}(z)\Big)^2,\atop \forall\,k\in J_1,\\
	\label{2.4c}
	S^{rest}_{i^\ast,k}(z)=\widehat{S}_{i^\ast,k}(z),\quad \forall\,z\in G_H,\ \forall\,k\in J_2.
	\end{gather}
	The precise description of the class of admissible (or feasible) images $\mathcal{I}$ will be given in the next section.
	
	\item[Case (A2)] (Interpolation Problem) The problem (A2) consists in generation of a new multi-band optical image $S^{int}_{t_M}:G_H\to\mathbb{R}^m$ at the Sentinel-level of resolution using result of the fusion of cloud-free MODIS image $M:G_L\to\mathbb{R}^6$ with the predicted structural prototype $\widehat{S}_{t_M}:G_H\rightarrow \mathbb{R}^m$ from the given day $t_M$. In fact, in this case we deal with the two-level problem. At the first level, having the collection of structural prototypes $\left\{ \widehat{S}_i:G_H\rightarrow \mathbb{R}^m\right\}_{i=1}^N$ which is associated with the time instances $\left\{t_1,t_2,\dots,t_N\right\}\subset[0,T]$, we create a new 'intermediate' image $\widehat{S}_{t_M}:G_H\rightarrow \mathbb{R}^m$ that can be considered as daily prediction of the topographical map of a given territory from the day $t_M$. Then, at the second level, we realize the fusion procedure of this predicted image with the cloud-free MODIS image $M:G_L\to\mathbb{R}^6$ of the same territory. As a result, we have to create a new image 
	$S^{interp}_{t_M}:G_H\to\mathbb{R}^m$ with properties:
	\begin{gather}
	\label{2.5a}
	\sum\limits_{z\in G_L}\Big( \left(\mathcal{K}\ast S^{int}_{t_M,k}\right)(z)-M_{k}(z)\Big)^2=\inf\limits_{I\in\mathcal{I}} \sum\limits_{z\in G_L}\Big( \left(\mathcal{K}\ast I\right)(z)-M_{k}(z)\Big)^2,\atop \forall\,k\in J_1,\\
	\label{2.5b}
	S^{int}_{t_M,k}(z)=\widehat{S}_{t_M,k}(z),\quad \forall\,z\in G_H,\ \forall\,k\in J_2.
	\end{gather}
	
	\item[Case (A3)] (Extrapolation Problem) The problem (A3) consists in generation of a new multi-band optical image $S^{ext}_{t_M}:G_H\to\mathbb{R}^m$ using result of the data assimilation from the cloud-free MODIS image $M:G_L\to\mathbb{R}^6$ into the structural prototype $\widehat{S}_N:G_H\rightarrow \mathbb{R}^m$ of the  Sentinel-image $S_N: G_H\setminus D_N\to\mathbb{R}^m$. Here, it is assumed that the level sets  of the given territory (or topographical map) for each Sentinel spectral channel from the day $t_M$ have the same geo-location as they have in $\widehat{S}_N:G_H\rightarrow \mathbb{R}^m$.
	So, we can set $\widehat{S}_{t_M}=\widehat{S}_N$. Thus, in the framework of this problem, we have to retrieve a new image $S^{ext}_{t_M}:G_H\to\mathbb{R}^m$, which would be well defined on the entire grid $G_H$, such that
	\begin{gather}
	\label{2.6a}
	\sum\limits_{z\in G_L}\Big( \left(\mathcal{K}\ast S^{ext}_{t_M,k}\right)(z)-M_{k}(z)\Big)^2=\inf\limits_{I\in\mathcal{I}}\sum\limits_{z\in G_L} \Big( \left(\mathcal{K}\ast I\right)(z)-M_{k}(z)\Big)^2,\atop
	 \forall\,k\in J_1,\\
	\label{2.6b}
	S^{ext}_{t_M,k}(z)=\widehat{S}_{t_M,k}(z),\quad \forall\,z\in G_H,\ \forall\,k\in J_2.
	\end{gather}
\end{description} 

To provide the detailed analysis of the above mentioned problems, we begin with some auxiliaries. 

\section{The Model for Prediction of Structural Prototypes}
\label{Sec 4}

Due to the iterative procedure \eqref{2.3}, we can define the so-called structural prototypes $\left\{ \widehat{S}_i:G_H\rightarrow \mathbb{R}^m\right\}_{i=1}^N$ for each cloud-corrupted Sentinel image $\left\{S_i: G_H\setminus D_i\to\mathbb{R}^m\right\}_{i=1}^N$. Let $i^\ast$ be an integer such that $t_{i^\ast}<t_M<t_{i^\ast+1}$. Let $\widehat{S}_{i^\ast,j}$ and $\widehat{S}_{i^\ast+1,j}$ be structural prototypes of the corresponding images from given days $t_{i^\ast}$ and $t_{i^\ast+1}$. Since $\widehat{S}_{i^\ast,j}$ and $\widehat{S}_{i^\ast+1,j}$ are well co-registered images, it is reasonable to assume that they have the similar geometric structure albeit they may have rather different intensities.

The main question we are going to discuss in this section is: how to correctly define the 'intermediate' image $\widehat{S}_{t_M}:G_H\rightarrow \mathbb{R}^m$ that can be considered as daily prediction of the topographical map of a given territory from the day $t_M$. With that in mind, for each spectral channel $j\in\left\{1,2,\dots,m\right\}$, we make use of the following model
\begin{gather}
	\label{4.1a}
	\ds\frac{\partial u}{\partial t}-\div \left(|\nabla u|^{p_u(t,x)-2}\nabla u\right)=v\quad\text{in }\ (t_{i^\ast},t_{i^\ast+1})\times\Omega,\\
	\label{4.1b}
	\partial_{\nu}u=0\quad\text{on }\ (t_{i^\ast},t_{i^\ast+1})\times\partial\Omega,\\
	\label{4.1c}
	u(t_{i^\ast},\cdot)=\widehat{S}_{i^\ast,j}(\cdot)\quad  \text{in }\ \Omega,
\end{gather}
where $p_u(t,x)$ stands for the texture index of the scalar image $u$ (see Definition~\ref{Def 1.2}), and $v\in L^2(t_{i^\ast},t_{i^\ast+1};L^2(\Omega))$ is an unknown source term that has to be defined in the way to guarantee the fulfillment (with some accuracy) of the relation
\begin{equation}
\label{4.2}
u(t_{i^\ast+1},\cdot)\approx \widehat{S}_{i^\ast+1,j}(\cdot)\quad\text{ in }\ \Omega.
\end{equation}
Here,  we assume that the images $\widehat{S}_{i^\ast,j}$ and $\widehat{S}_{i^\ast+1,j}$ in \eqref{4.1c} and \eqref{4.2} are well defined into the entire domain $\Omega$.

\begin{remark}
	\label{Rem 4.1} 
	The main characteristic feature of the proposed initial-boundary value problem (IBVP) is the fact that the exponent $p_u$ depend not only on $(t,x)$ but also on a solution $u(t,x)$ of this problem. It is well-known that the variable character of the exponent $p_u$ causes a gap between the monotonicity and coercivity conditions. Because of this gap, equations of the type \eqref{4.1a} can be termed as equations with nonstandard growth conditions. So, in fact, we deal with the Cauchy-Neumann IBVP for a parabolic equation of $p_u=p(t,x,u)$-Laplacian type with variable exponent of nonlinearity. It was recently shown that the model \eqref{4.1a}--\eqref{4.1c} naturally appears as the Euler-Lagrange equation in the problem of restoration of cloud contaminated satellite optical images \cite{DAKOMAUV2022,KhKU21}. Moreover, the above mentioned problem can be considered as a  model for the deblurring and denoising of multi-spectral images. In particular, this model has been proposed in \cite{DAKOMAUV2021,KKU} in order to avoid the blurring of edges and other localization problems presented by linear diffusion models in images processing. We also refer to \cite{KKM22_1}, where the authors study some optimal control problems associated with a special case of the model \eqref{4.1a}--\eqref{4.1c} and show that the given class of optimal control problems is well posed.
\end{remark}

Before proceeding further, we note that the distributed control $v$ in the right hand side of \eqref{4.1a} describes the fictitious sources or sinks of the intensity $u$ that may have a tendency to change at most pixels even for co-registered structural prototypes $\widehat{S}_{i^\ast,j}(\cdot)$ and ${S}_{i^\ast+1,j}(\cdot)$. As for the Neumann boundary condition $\partial_{\nu} u=0$ on $\partial\Omega$, this condition corresponds to the reflection of the image across the boundary and has the advantage of not imposing any value on the boundary and not creating 'edges' on it. So, it is very natural conditions if we assume that the boundary of the image is an arbitrary cutoff of a larger scene in view.

In order to characterize the solvability issues of the IBVP \eqref{4.1a}--\eqref{4.1c}, we adopt the following concept.

\begin{definition}
	\label{Def 4.4}
	We say that, for given $v\in L^2(\Omega)$ and $\widehat{S}_{i^\ast,j}\in L^2(\Omega)$, a function $u$ is a weak solution to the problem \eqref{4.1a}--\eqref{4.1c} if 
	\begin{equation}
	\label{4.4}
	u\in L^2(t_{i^\ast},t_{i^\ast+1};L^2(\Omega)),\  u(t,\cdot)\in W^{1,1}(\Omega)\ \text{ for a.a. $t\in[t_{i^\ast},t_{i^\ast+1}]$},\atop \ds  \int_{t_{i^\ast}}^{t_{i^\ast+1}}\int_{\Omega} |\nabla u|^{p_u(t,x)}\,dx dt<+\infty,
	\end{equation}
	and the integral identity
	\begin{multline}
	\label{4.5}
	\int_{t_{i^\ast}}^{t_{i^\ast+1}}\int_{\Omega} \left(-u\frac{\partial\varphi}{\partial t}+\left(|\nabla u|^{p_u}\nabla u,\nabla\varphi\right)\right)\,dx dt\\
	=\int_{t_{i^\ast}}^{t_{i^\ast+1}}\int_{\Omega} v\varphi\,dxdt+\int_\Omega \widehat{S}_{i^\ast,j}\varphi|_{t=t_{i^\ast}}\,dx	
	\end{multline}
	holds true for any function $\varphi\in \Phi$, where
	$\Phi=\left\{\varphi\in C^\infty([t_{i^\ast},t_{i^\ast+1}]\times\overline{\Omega})\ : \left.\varphi\right|_{t=t_{i^\ast+1}}=0\right\}$. 
\end{definition}

The following result highlights the way in what sense the weak solution takes the initial value $u(t_{i^\ast},\cdot)=\widehat{S}_{i^\ast,j}(\cdot)$.
\begin{proposition}[\cite{Kogut2022}]
	\label{Prop 2.6}
	Let $v\in H^1(\Omega)$ and $\widehat{S}_{i^\ast,j}\in L^2(\Omega)$ be given distributions.
	Let $u $ be a weak solution to the problem \eqref{4.1a}--\eqref{4.1c} in the sense of Definition~\ref{Def 4.4}. Then, for any $\eta\in C^\infty(\overline{\Omega})$, the scalar function $h(t)=\ds\int_\Omega u(t,x) \eta(x)\,dx$ belongs to $W^{1,1}(t_{i^\ast},t_{i^\ast+1})$ and $h(0)=\ds\int_\Omega \widehat{S}_{i^\ast,j}(x) \eta(x)\,dx$.
\end{proposition}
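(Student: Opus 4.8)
The plan is to feed into the weak formulation \eqref{4.5} the family of separated test functions $\varphi(t,x)=\psi(t)\eta(x)$, where $\eta\in C^\infty(\overline\Omega)$ is the fixed function from the statement and $\psi\in C^\infty([t_{i^\ast},t_{i^\ast+1}])$ is an arbitrary scalar factor subject only to $\psi(t_{i^\ast+1})=0$; such a $\varphi$ plainly lies in $\Phi$. For these functions $\partial_t\varphi=\psi'\eta$ and $\nabla\varphi=\psi\,\nabla\eta$, so \eqref{4.5} collapses to the single scalar identity
\[
-\int_{t_{i^\ast}}^{t_{i^\ast+1}}\psi'(t)\,h(t)\,dt
=\int_{t_{i^\ast}}^{t_{i^\ast+1}}\psi(t)\bigl(G-F(t)\bigr)\,dt
+\psi(t_{i^\ast})\int_\Omega \widehat{S}_{i^\ast,j}\,\eta\,dx,
\]
where $h(t)=\int_\Omega u(t,x)\eta(x)\,dx$, the constant $G=\int_\Omega v\,\eta\,dx$ is finite because $v$ is time-independent (here only $v\in L^1(\Omega)$ is actually used, so the $H^1$-hypothesis is not essential for this statement), and $F(t)=\int_\Omega\bigl(|\nabla u|^{p_u-2}\nabla u,\nabla\eta\bigr)\,dx$ is the nonlinear diffusion flux of \eqref{4.1a} paired against $\nabla\eta$.

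Before reading off any conclusion I would verify the integrability that turns the right-hand side into a genuine $L^1$ datum. That $h\in L^1(t_{i^\ast},t_{i^\ast+1})$ is immediate from $u\in L^2(t_{i^\ast},t_{i^\ast+1};L^2(\Omega))$ and Cauchy--Schwarz, so the real work is the flux $F$. Here I would invoke the elementary pointwise bound $s^{p_u-1}\le 1+s^{p_u}$, valid for every $s\ge 0$ since $1<p_u\le 2$ by \eqref{1.7a}, to estimate
\[
|F(t)|\le \|\nabla\eta\|_{L^\infty(\Omega)}\int_\Omega |\nabla u|^{p_u-1}\,dx
\le \|\nabla\eta\|_{L^\infty(\Omega)}\Bigl(|\Omega|+\int_\Omega |\nabla u|^{p_u}\,dx\Bigr),
\]
and then integrate in $t$ and use the finiteness of $\int_{t_{i^\ast}}^{t_{i^\ast+1}}\!\int_\Omega|\nabla u|^{p_u}\,dx\,dt$ recorded in \eqref{4.4}. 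This yields $F\in L^1(t_{i^\ast},t_{i^\ast+1})$, hence $G-F\in L^1$. I expect this estimate --- taming the variable-exponent flux using the single a priori bound \eqref{4.4} --- to be the main (and essentially only) technical obstacle.

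With integrability secured the remainder is soft. First restrict to $\psi\in C_c^\infty(t_{i^\ast},t_{i^\ast+1})$, so the boundary term drops and the displayed identity reduces to $-\int\psi' h\,dt=\int\psi\,(G-F)\,dt$; this is exactly the statement that $h'=G-F$ in the distributional sense on the open interval, and since $G-F\in L^1$ it follows that $h\in W^{1,1}(t_{i^\ast},t_{i^\ast+1})$, proving the first assertion. By the one-dimensional embedding $W^{1,1}(t_{i^\ast},t_{i^\ast+1})\hookrightarrow C([t_{i^\ast},t_{i^\ast+1}])$, the function $h$ admits an absolutely continuous representative with well-defined pointwise values. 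To pin down the initial value I would then return to a general $\psi$ with $\psi(t_{i^\ast+1})=0$ but $\psi(t_{i^\ast})\neq 0$, integrate $-\int\psi' h\,dt$ by parts (legitimate now that $h$ is absolutely continuous), substitute $h'=G-F$, and cancel the common integral $\int\psi\,(G-F)\,dt$ against the displayed identity. What survives is $\psi(t_{i^\ast})\,h(t_{i^\ast})=\psi(t_{i^\ast})\int_\Omega\widehat{S}_{i^\ast,j}\,\eta\,dx$; dividing by $\psi(t_{i^\ast})\neq 0$ gives $h(t_{i^\ast})=\int_\Omega\widehat{S}_{i^\ast,j}\,\eta\,dx$, which is precisely the claimed attainment of the initial datum.
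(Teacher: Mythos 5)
Your argument is correct, and it is the standard proof of this kind of statement: separated test functions $\psi(t)\eta(x)$, an $L^1(t_{i^\ast},t_{i^\ast+1})$ bound on the flux term via $s^{p_u-1}\le 1+s^{p_u}$ together with the a priori bound in \eqref{4.4}, identification of $h'$ as an $L^1$ function, and then integration by parts against $\psi$ with $\psi(t_{i^\ast})\neq 0$ to read off the initial trace. The paper itself gives no proof (the proposition is imported from \cite{Kogut2022}), but your route is precisely the expected one, and your side remark that only $v\in L^1(\Omega)$ is used is accurate.
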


Utilizing the perturbation technique and the classical fixed point theorem of Schauder \cite{NIRENBERG}, it has been recently proven  the following existence result. 

\begin{theorem}[\cite{Kogut2022}]
	\label{Th 4.4}
	Let $v\in L^2(\Omega)$ and $\widehat{S}_{i^\ast,j}\in L^2(\Omega)$  be given distributions.
	Then the initial-boundary value problem \eqref{4.1a}--\eqref{4.1c} admits at least one weak solution $u=u(t,x)$ with the following higher inegrability properties 
	\begin{equation}
	\label{4.6}
	u\in L^\infty(t_{i^\ast},t_{i^\ast+1};L^2(\Omega)),\  u\in W^{1,\alpha}(\left(t_{i^\ast},t_{i^\ast+1}\right)\times\Omega),\atop \ds  u\in L^{2\alpha}\left(t_{i^\ast},t_{i^\ast+1};L^{2\alpha}(\Omega)\right),
	\end{equation}
	where the exponent $\alpha$ is given by the rule
	\begin{align*}
	\alpha&=
	{ah}\left[{ah+\|G_{\sigma}\|^2_{C^1(\overline{\Omega-\Omega})} |\Omega| \left(\|v\|^2_{L^2(Q_T)}+2\|\widehat{S}_{i^\ast,j}\|^2_{L^2(\Omega)}\right)}\right]^{-1}
	\end{align*}
\end{theorem}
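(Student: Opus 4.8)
The plan is to decouple the nonlocal dependence of the exponent on the unknown by freezing it, and then to recover the genuine solution through a fixed-point argument. First I would fix an auxiliary function $w\in C([t_{i^\ast},t_{i^\ast+1}];L^2(\Omega))$ and form its texture index $p_w$ according to Definition~\ref{Def 1.2}. By Lemma~\ref{Lemma 1.7}, $p_w$ is then a bona fide exponent: it is Lipschitz on $\overline{Q_T}$ and satisfies $1+\delta\le p_w\le 2$, so the frozen principal part $-\div\left(|\nabla u|^{p_w-2}\nabla u\right)$ is a monotone, coercive, hemicontinuous operator acting between the Sobolev-Orlicz space with the fixed variable exponent $p_w$ and its dual. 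For this frozen problem the datum $v\in L^2(\Omega)$ and the initial state $\widehat{S}_{i^\ast,j}\in L^2(\Omega)$ place us in the classical setting of monotone parabolic equations, and I would invoke the standard existence theory (Galerkin approximation together with the theory of maximal monotone operators) to obtain a weak solution in the sense of Definition~\ref{Def 4.4}. This defines the solution operator $\mathcal{T}\colon w\mapsto u$.

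Second, I would establish the a priori estimates that simultaneously localise the fixed point and yield the higher integrability \eqref{4.6}. Testing the frozen equation with $u$ itself and integrating over $(t_{i^\ast},t)$ produces the energy identity
\[
\tfrac12\|u(t,\cdot)\|_{L^2(\Omega)}^2+\int_{t_{i^\ast}}^{t}\!\!\int_\Omega |\nabla u|^{p_w}\,dx\,ds=\tfrac12\|\widehat{S}_{i^\ast,j}\|_{L^2(\Omega)}^2+\int_{t_{i^\ast}}^{t}\!\!\int_\Omega v u\,dx\,ds,
\]
from which Young's inequality gives $u\in L^\infty(t_{i^\ast},t_{i^\ast+1};L^2(\Omega))$ together with control of $\iint|\nabla u|^{p_w}$. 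Splitting the domain into $\{|\nabla u|\le 1\}$ and $\{|\nabla u|>1\}$ and using $p_w\ge\alpha$ converts the latter into an $L^\alpha$ bound on $\nabla u$, hence $u\in W^{1,\alpha}((t_{i^\ast},t_{i^\ast+1})\times\Omega)$. The parabolic Gagliardo-Nirenberg interpolation between $L^\infty(L^2)$ and $L^\alpha(W^{1,\alpha})$ in dimension two then upgrades this to $u\in L^{2\alpha}(t_{i^\ast},t_{i^\ast+1};L^{2\alpha}(\Omega))$, while feeding the energy bound $\|u\|_{L^2(L^2)}^2\le \|v\|^2+2\|\widehat{S}_{i^\ast,j}\|^2$ into formula \eqref{1.7d} fixes the explicit value of $\alpha$. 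Since a uniform $L^2(L^2)$-bound on $w$ on a ball yields, through Lemma~\ref{Lemma 1.7}, a uniform lower bound for $p_w$, these estimates are uniform in $w$, so $\mathcal{T}$ maps a sufficiently large closed ball of $C([t_{i^\ast},t_{i^\ast+1}];L^2(\Omega))$ into itself.

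Third, I would verify the hypotheses of Schauder's fixed-point theorem on this ball. Compactness of $\mathcal{T}$ follows from an Aubin-Lions-Simon argument: the uniform $W^{1,\alpha}$-bound in space together with the bound on $\partial_t u=\div\left(|\nabla u|^{p_w-2}\nabla u\right)+v$ in a negative Sobolev space gives relative compactness of the image in $C([t_{i^\ast},t_{i^\ast+1}];L^2(\Omega))$, which is precisely the space in which the exponent is formed. Continuity of $\mathcal{T}$ reduces to showing that $w_k\to w$ in $C(L^2)$ forces convergence of the corresponding solutions; here the smoothing by $G_\sigma$ renders $w\mapsto p_w$ continuous from $C(L^2)$ into $C^{0,1}(\overline{Q_T})$ by Lemma~\ref{Lemma 1.7}, so $p_{w_k}\to p_w$ uniformly. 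Applying Schauder then yields a fixed point $u=\mathcal{T}(u)$, which by construction is a weak solution of \eqref{4.1a}--\eqref{4.1c} satisfying \eqref{4.6}; the sense in which the initial condition \eqref{4.1c} is attained is guaranteed by Proposition~\ref{Prop 2.6}.

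I expect the principal difficulty to lie in the continuity and limit-passage step, specifically in identifying the weak limit of the nonlinear flux $|\nabla u_k|^{p_{w_k}-2}\nabla u_k$. Because the exponent itself varies with $k$, the usual Minty monotonicity trick must be combined with the uniform convergence $p_{w_k}\to p_w$ and with care that the variable-exponent modular convergence does not degenerate at points where $p_w$ approaches its lower bound $\alpha$. Controlling this coupling, rather than the routine energy estimates, is where the real work resides, and it is exactly the gap between monotonicity and coercivity noted in Remark~\ref{Rem 4.1} that one must navigate.
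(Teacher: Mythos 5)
Your proposal follows essentially the same route the paper indicates for this result: the paper does not reprove Theorem~\ref{Th 4.4} but cites \cite{Kogut2022} and explicitly attributes the argument to ``the perturbation technique and the classical fixed point theorem of Schauder,'' which is precisely your freeze-the-exponent, a priori estimate, Aubin--Lions compactness, Schauder fixed-point scheme, with the explicit $\alpha$ recovered by inserting the energy bound into \eqref{1.7d}. Your sketch is consistent with that strategy and with Lemma~\ref{Lemma 1.7} and Proposition~\ref{Prop 2.6} as used in the paper.
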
 

In order to satisfy the condition \eqref{4.2} and define an appropriate source term $v=v(t,x)$, 
we utilize some issues coming from the well-known method of Horn and Schunck \cite{Horn} that has been developed in order to compute optical flow velocity from spatiotemporal derivatives of image intensity. Following this approach, we define the function $v^\ast$ as a solution of the problem
\begin{multline}
\int_{\Omega}\left(\left.\frac{\partial Y}{\partial t}\right|_{t=(t_{i^\ast}+t_{i^\ast+1})/2}-\left.\div\left(|\nabla Y|^{p_Y}\nabla Y\right)\right|_{t=(t_{i^\ast}+t_{i^\ast+1})/2}-v\right)^2\,dx\\ +\lambda_1^2\int_{\Omega}|\nabla v|^2\,dx\rightarrow \inf_{v\in H^1(\Omega)},
\label{4.3}
\end{multline}
where $\lambda_1>0$ is a tuning parameter (for numerical simulations we take $\lambda_1=0.5$), and the spatiotemporal derivatives are computed by the rules
\begin{align*}
\left.\frac{\partial Y}{\partial t}\right|_{t=(t_{i^\ast}+t_{i^\ast+1})/2}&=\frac{1}{t_{i^\ast+1}-t_{i^\ast}} \left(\widehat{S}_{i^\ast+1,j}-\widehat{S}_{i^\ast,j}\right),\\
\left.\div\left(|\nabla Y|^{p_Y}\nabla Y\right)\right|_{t=(t_{i^\ast}+t_{i^\ast+1})/2}&=\frac{1}{2} \Big[\div\left( |\nabla \widehat{S}_{i^\ast,j}|^{p_{\widehat{S}_{i^\ast,j}}}\nabla \widehat{S}_{i^\ast,j}\right)\\
&\quad+\div\left(|\nabla \widehat{S}_{i^\ast+1,j}|^{p_{\widehat{S}_{i^\ast+1,j}}}\nabla \widehat{S}_{i^\ast+1,j}\right)\Big].
\end{align*}
 
It is clear that a minimum point $v^\ast\in H^1(\Omega)$ to unconstrained minimization problem \eqref{4.3} is unique and satisfies necessarily the Euler-Lagrange equation
\begin{gather}
\label{4.7a}
\lambda_1^2\Delta v^\ast +\left(\left.\frac{\partial Y}{\partial t}\right|_{t=(t_{i^\ast}+t_{i^\ast+1})/2}-\left.\div\left(|\nabla Y|^{p_Y}\nabla Y\right)\right|_{t=(t_{i^\ast}+t_{i^\ast+1})/2}-v^\ast\right)=0
\end{gather}
with the Nuemann boundary condition $\partial_{\nu} v^\ast=0$ on $\partial\Omega$.

Setting $v=v^\ast$ in \eqref{4.1a}, we can define a function $u^\ast=u^\ast(t,x)$ as the weak solution of the IBVP \eqref{4.1a}--\eqref{4.1c}. Numerical experiments show that, following this way, we obtain a function $u^\ast$ with properties \eqref{4.4} and \eqref{4.6} such that
\[
u^\ast(t_{i^\ast},x)=\widehat{S}_{i^\ast,j}(x)\quad\text{and}\quad u^\ast(t_{i^\ast+1},x)\approx \widehat{S}_{i^\ast+1,j}(x)\quad\text{in }\ \Omega,
\]
where the peak signal-to-noise ratio (PSNR) between images $u^\ast(t_{i^\ast+1},x)$ and $\widehat{S}_{i^\ast+1,j}(x)$ is sufficiently large, $PSNR> 46$.

This observation leads us to the following conclusion: the 'intermediate' image $\widehat{S}_{t_M}:G_H\rightarrow \mathbb{R}^m$
can be defined as follow:
\begin{equation}
\label{4.7}
\widehat{S}_{t_M}(x)=u^\ast(t_{M},x),\quad \forall\, x\in G_H.
\end{equation}

To illustrate how the proposed model \eqref{4.1a}--\eqref{4.1c} works, we consider as an input data two images $S_1$, $S_2$ of some region that represent a typical agricultural area in Australia the resolution $20 m/pixel$. These images have been delivered from Sentinel-2 and captured at the time instances $t_1='July, 08'$ and $t_2='August, 25'$, respectively. Both of these images are cloud-free (so, we can set $\widehat{S}_1=S_1$ and $\widehat{S}_2=S_2$) and their 
spectral energies ($Y_{S_1}$ and $Y_{S_2}$) are depicted in Figure~\ref{Fig_4.5}.

\begin{figure}[h]
	\centering
	\includegraphics[width=6.5cm, height=6.5cm]{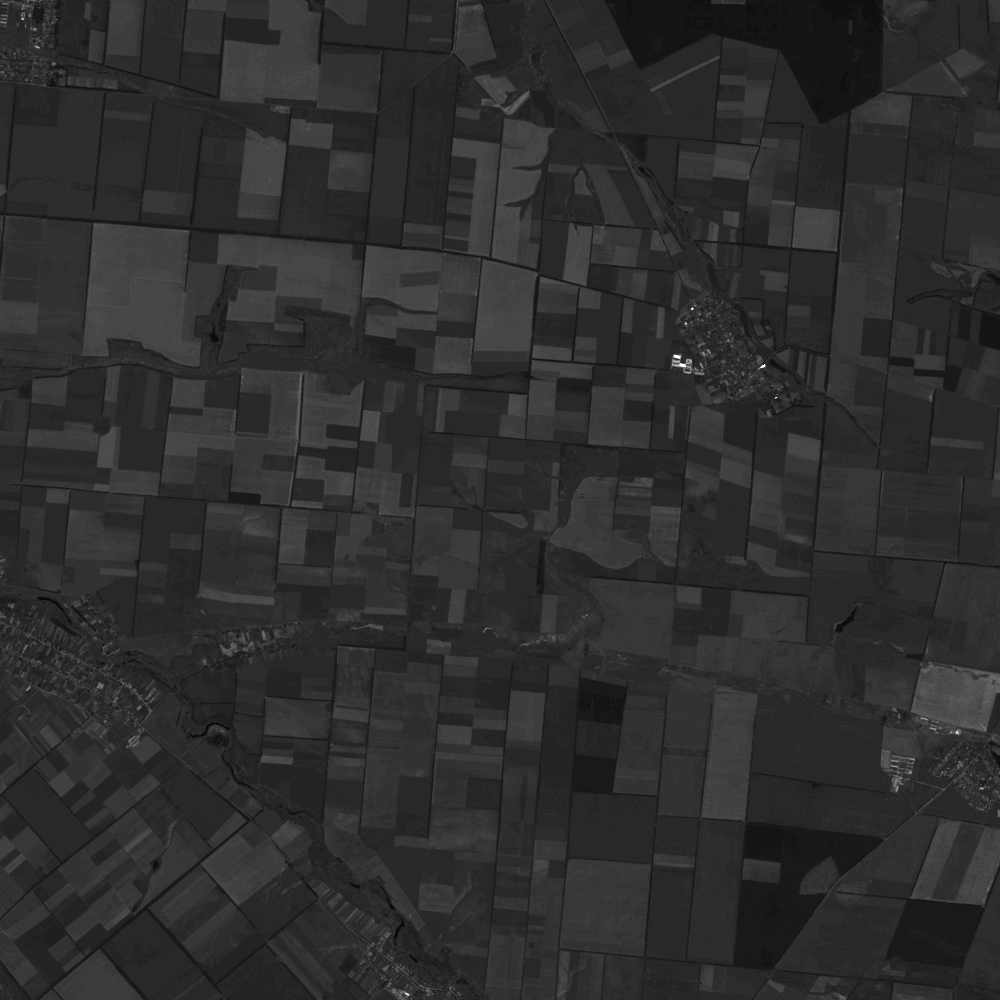}$\quad$
	\includegraphics[width=6.5cm, height=6.5cm]{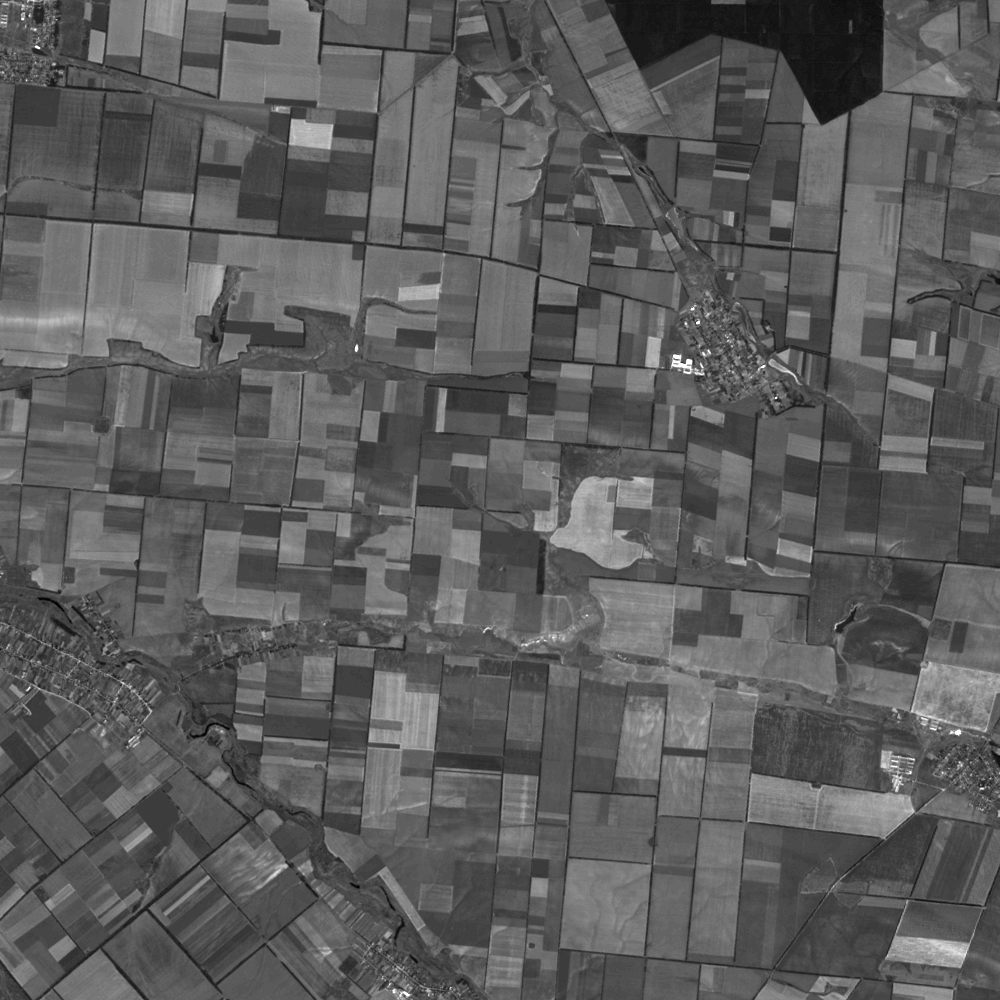}
	\caption{\label{Fig_4.5} The spectral energies of the Sentinel images $S_1$ and $S_2$ with resolution $20 m/pixel$. The real size of each image is $1000\times 1000$ pixels.}
\end{figure}

Setting $p_u$ in the form \eqref{1.6} with $h=0.1$, $a=0.01$, $\sigma=1$, and defining the function $v$ as a solution of the Neumann boundary value problem \eqref{4.7a}, we pass to the numerical solution of the IBVP \eqref{4.1a}--\eqref{4.1c}. To this end, we use an implicit discretization in time of equation \eqref{4.1a}, and after we apply the conjugate gradient method. As a result, we define a function $u^\ast(t,x)$ such that $u^\ast(t_1,x)=S_1(x)$, $\forall\,x\in G_H$,  and the peak signal-to-noise ratio between
$u^(t_2,\cdot)$ and $S_2(\cdot)$ is equal to $36.41$. It means that we can guarantee the equality $u^\ast(t_2,\cdot)\approx S_2(\cdot)$ at the high level of accuracy. Let $t_3='July, 18'$ and $t_4='August, 10'$ be some intermediate time point on the interval $[t_1,t_2]$.  In Figure~\ref{Fig_4.7} the screenshots $u^\ast(t_3,\cdot)$ and $u^\ast(t_4,\cdot)$ of the solution to the problem
\eqref{4.1a}--\eqref{4.1c} are shown. Thus, the function $u^\ast$ can be considered as an acceptable approximation to the evolution of the spectral energy of Sentinel images over the time interval  $[t_1,t_2]$.

\begin{figure}[h]
	\centering
	\includegraphics[width=6.5cm, height=6.5cm]{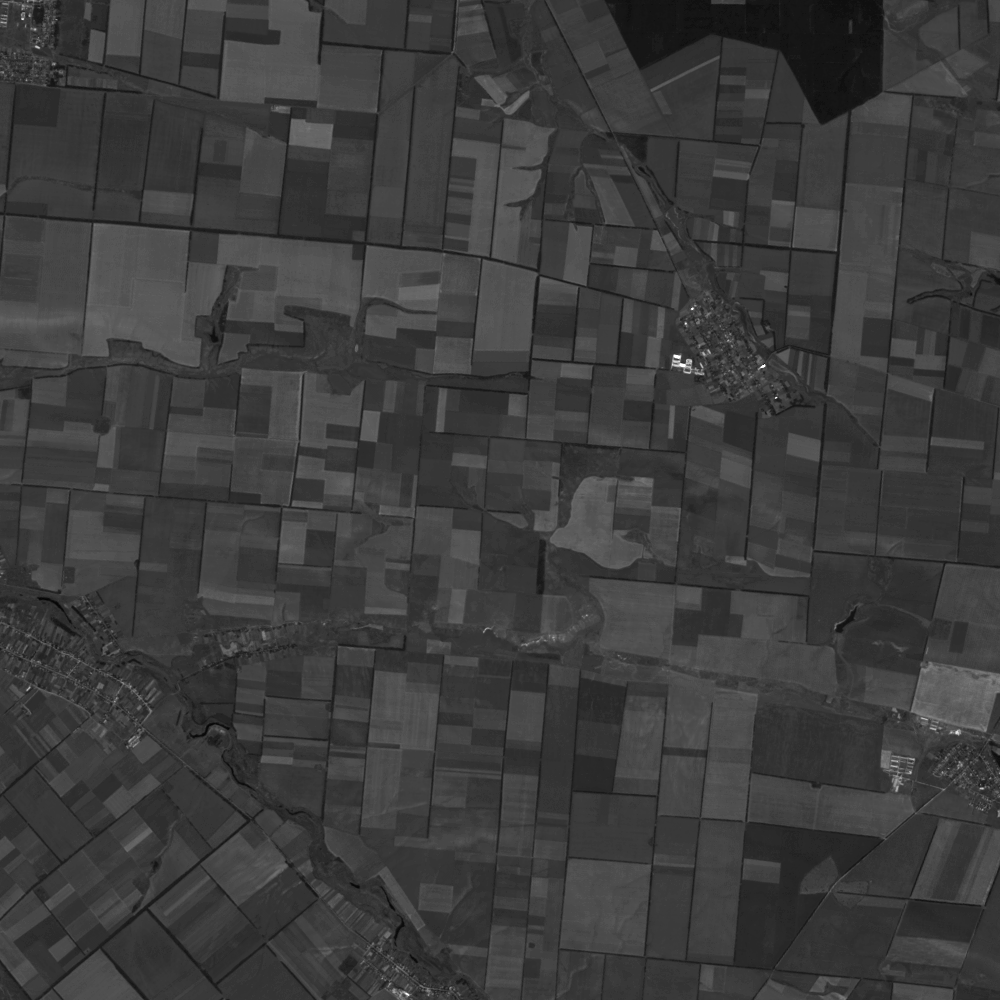}$\quad$
	\includegraphics[width=6.5cm, height=6.5cm]{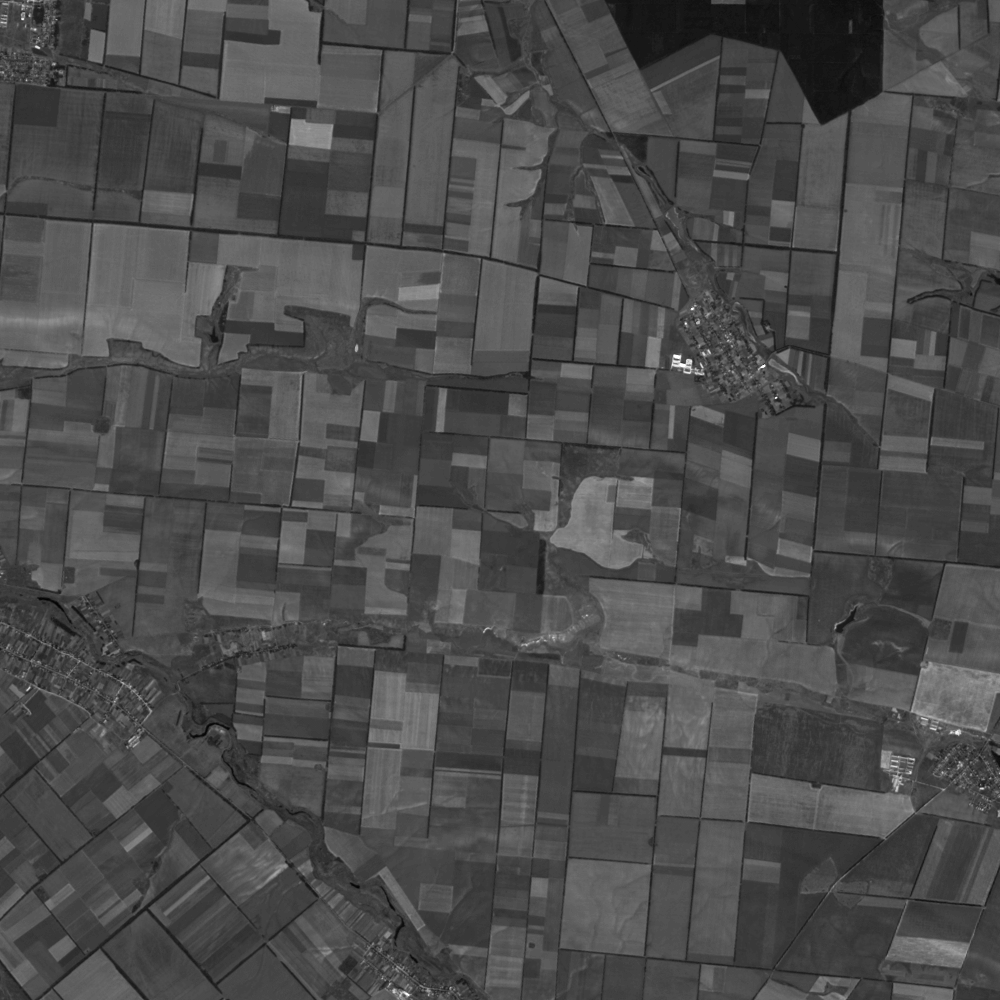}
	\caption{\label{Fig_4.7} The screenshots $u^(t_3,\cdot)$ and $u^\ast(t_4,\cdot)$ of the solution to the problem \eqref{4.1a}--\eqref{4.1c} taken at the time instances $t_3$ and $t_4$, respectively.}
\end{figure}

\section{Variational Statements of the Data Fusion Problems (A1)--(A3)}
\label{Sec 5}

Coming back to the principle cases (A1)--(A3), that have been described in Section~\ref{Sec 3}, we can suppose that a structural prototype $\widehat{S}_{t_M}:G_H\rightarrow \mathbb{R}^m$ from the given day $t_M$ is well defined. As it was emphasized in Section~\ref{Sec 3}, this prototype coincides either with one of the images $\left\{ \widehat{S}_i:G_H\rightarrow \mathbb{R}^m\right\}_{i=1}^N$ in cases (A1) and (A3), or it is defined using the solutions of the problem \eqref{4.1a}--\eqref{4.1c}, \eqref{4.7a} for each $j=1,\dots,m$ in the case (A2)-problem (see the rule \eqref{4.7}). For further convenience, we assume that $\widehat{S}_{t_M}:G_H\rightarrow \mathbb{R}^m$ is zero-extended outside of $\Omega$.

Let $j\in\left\{1,2,\dots,m\right\}$ be a fixed index value (the number of spectral channel). Let $q_j:\Omega\rightarrow \mathbb{R}$ be the texture index of the $j$-th band for the structural prototype $\widehat{S}_{t_M}:G_H\rightarrow \mathbb{R}^m$, i.e.
\begin{equation}
\label{5.1}
q_j(x):=1+g\left(\left|\left(\nabla G_\sigma\ast \widehat{S}_{t_M,j}\right) (x)\right|^2\right),\ \forall\,x\in \Omega,
\end{equation}
where
$g{:}[0,\infty)\rightarrow (0,\infty)$ is the edge-stopping function which we take in the form of the Cauchy law $
g(s)=\frac{a}{a+s}$ with $a>0$ small enough. 
Let $\eta\in (0,1)$ be a given threshold. Let $\theta_j=\left[\theta_{j,1}\atop \theta_{j,2}\right]\in L^\infty(\Omega,\mathbb{R}^2)$ be a vector field such that
\begin{equation*}
|\theta_j(x)|_{\mathbb{R}^2}\le 1\ \text{  and }\ \left(\theta_j(x),\nabla \widehat{S}_{t_M,j}(x)\right)_{\mathbb{R}^2}=| \nabla \widehat{S}_{t_M,j}(x)|_{\mathbb{R}^2}	\quad\text{a.e. in $\Omega$.}
\end{equation*}
As it was mentioned in Subsection~\ref{Subsec_1}, for each spectral channel $j\in\left\{1,\dots,m\right\}$ this vector-field can be defined by the rule 
$\theta_j(x)=\frac{\nabla U_j(t,x)}{|\nabla U_j(t,x)|}$ with $t>0$ small enough, where $U_j(t,x)$ is a solution the following initial-boundary value problem
\begin{align}
\label{5.2a}
&\frac{\partial U}{\partial t}=\mathrm{div}\,\left(\frac{\nabla U}{|\nabla U|+\e}\right),\quad   t\in(0,+\infty),\ x\in\Omega,\\
\label{5.2b}
&U(0,x)= \widehat{S}_{j,t_M}(x),\quad x\in\Omega,\\
\label{5.2c}
&\frac{\partial U(0,x)}{\partial\nu}=0,\quad t\in(0,+\infty),\ x\in\partial\Omega
\end{align}
with a relaxed version of the $1D$-Laplace operator in the principle part of \eqref{5.2a}. Here, $\e>0$ is a sufficiently small positive value.

Taking into account the definition of the Directional Total Variation (see \cite{Bung1}), we define a linear operator $R_{j,\eta}:\mathbb{R}^2\rightarrow \mathbb{R}^2$ as follows
\begin{equation}
\label{5.3}
R_{j,\eta}\nabla v:=\nabla v-\eta^2 \left(\theta_j,\nabla v\right)_{\mathbb{R}^2} \theta_j,\quad\forall\,v\in W^{1,1}(\Omega).
\end{equation}
It is clear that $R_{j,\eta}\nabla v$ reduces to $(1-\eta^2)\nabla v$ in those regions where the gradient $\nabla v$ is co-linear to $\theta_j$, and to $\nabla v$, where $\nabla v$ is orthogonal to $\theta_j$. It is important to emphasize that this operator does not enforce gradients in the direction $\theta_j$.

Let $\delta_{(x_i,y_j)}$ be the Dirac's delta at the point $(x_i,y_j)$. Then $\Pi_{S_L}=\sum_{(x_i,y_j)\in S_L} \delta_{(x_i,y_j)}$ stands for the Dirac's comb defined onto the sample grid $G_L$.

We are now in a position to give a precise meaning of the solutions to Problems (A1)--(A3). We say that:
\begin{enumerate}
	\item[(A1)]  A multi-band image $S^{rest}_{i^\ast}:G_H\to\mathbb{R}^m$, where $t_{i^\ast}=t_M$, is a solution of the Restoration Problem, if it 
	is given by the rule
	\begin{align}
	\label{5.5}
	S^{rest}_{i^\ast,j}(x)&=\left\{
	\begin{array}{ll}
	S_{i^\ast,j}(x) &,\ \forall\,x\in G_H\setminus D_{i^\ast},\\
	\beta_j u^0_j(x) &, \ \forall\,x\in G_H\cap D_{i^\ast},
	\end{array}
	\right.\ \forall\,j\in J_1,\\ 
	S^{rest}_{i^\ast,j}(x)&=\widehat{S}_{t_M,j}(x),\quad \forall\,x\in G_H,\ \forall\,j\in J_2.
	\end{align}
	Here, $\beta_j$ is the weight coefficient and we define it as follows
	\[
	\beta_j=\left[\int_{\Omega\setminus D_{i^\ast}} S_{i^\ast,j}(x)u^0_j(x)\,dx\right]/\left[\int_{\Omega\setminus D_{i^\ast}} |u^0_j(x)|^2\,dx\right]
	\]
	and $u^0_j$ is a solutions of the following constrained minimization problem
	\begin{equation}
	\label{5.5aa}
	\left(\mathcal{P}\right)\qquad\qquad
	\mathcal{F}_j\left(u^0_j\right)=\inf_{u\in\Xi_j}\mathcal{F}_j(u),
	\end{equation}
	where
	\begin{multline}
	\label{5.5ab}
	\mathcal{F}_j(u):=\int_{\Omega}\frac{1}{q_j(x)}|R_{j,\eta}\nabla u(x)|^{q_j(x)}\,dx
	+\frac{\mu}{2}\int_{\Omega}\left|\nabla u(x) -\nabla \widehat{S}_{t_M,j}(x)\right|^2\,dx\\
	+\frac{\gamma}{2}\int_{\Omega\setminus D_{i^\ast}}\left| u(x) -S_{t_M,j}(x)\right|^2\,dx
	+\frac{\vartheta}{2}\int_{\Omega}\Pi_{S_L}\Big( \left[\mathcal{K}\ast u-M_{t_M,j}\right]\Big)^2\,dx,
	\end{multline}
	$\Xi_j=\left\{u\in W^{1,q_j(\cdot)}(\Omega)\ :\ 0\le u(x)\le C_j\ \text{a.e. in }\ \Omega\right\}$ stands for the set of feasible solutions, $\mu>0$, $\gamma>0$, $\vartheta>0$ are some weight coefficients, and $W^{1,q_j(\cdot)}(\Omega)$ denotes the Sobolev space with variable exponent (for the details we refer to Appendix C). As for the constants $C_j$, their choice depends on the format of signed integer numbers in which the corresponding intensities $S_{i,j}(x)$ are represented. In particular, it can be $C_j=2^8-1$, $C_j=2^{16}-1$, and so on.

	\item[(A2)]  A multi-band image $S^{int}_{t_M}:G_H\to\mathbb{R}^m$, with $t_{i^\ast}<t_M<t_{i^\ast+1}$, is a solution of the Interpolation Problem, if it is given by the rule
	\begin{equation}
	\label{5.6}
	S^{int}_{t_M,j}(x)=\left\{
	\begin{array}{ll}
	\beta_j u^0_j(x) &,\ \forall\,j\in J_1,\\
	\widehat{S}_{t_M,j}(x) &, \ \forall\,j\in J_2,
	\end{array}
	\right.\ \forall\,x\in G_H,
	\end{equation}
	where 
	\begin{equation}
	\label{5.6a}
	\beta_j=\left[\int_{\Omega} \widehat{S}_{t_M,j}(x)u^0_j(x)\,dx\right]/\left[\int_{\Omega} |u^0_j(x)|^2\,dx\right]
	\end{equation}
	and $u^0_j$ is a solutions of the constrained minimization problem \eqref{5.5aa}--\eqref{5.5ab} with $D_{i^\ast}=\Omega$.
	
	\item[(A3)]  A multi-band image $S^{ext}_{t_M}:G_H\to\mathbb{R}^m$, with $t_N<t_M<T$, is a solution of the Extrapolation Problem, if it is given by the rule
	\begin{equation}
	\label{5.7}
	S^{ext}_{t_M,j}(x)=\left\{
	\begin{array}{ll}
	\beta_j u^0_j(x) &,\ \forall\,j\in J_1,\\
	\widehat{S}_{t_M,j}(x) &, \ \forall\,j\in J_2,
	\end{array}
	\right.\ \forall\,x\in G_H,
	\end{equation}
	where $u^0_j$ is a solutions of the constrained minimization problem \eqref{5.5aa}--\eqref{5.5ab}  with $D_{i^\ast}=\Omega$, and $\beta_j$ is defined as in \eqref{5.6a}.
\end{enumerate}  

Let us briefly discuss the relevance of the proposed minimization problem $\left(\mathcal{P}\right)$. We begin with the motivation to the choice of the energy functional in the form \eqref{5.5aa}--\eqref{5.5ab}. 

The first term in \eqref{5.5ab} can be considered as the regularization in the Sobolev-Orlicz space $W^{1,q_j(\cdot)}(\Omega)$ because, for each spectral channel, we have
\begin{equation}
\label{5.8.0}
(1-\eta^2)|\nabla u|\le |R_{j,\eta}\nabla u|\le |\nabla u|\quad\text{in $\Omega$}
\end{equation}
with a given threshold $\eta\in (0,1)$. Hence,
\begin{equation}
\label{5.8}
\ds\int_{\Omega}\frac{1}{q_j(x)}|R_{j,\eta}\nabla u(x)|^{q_j(x)}\,dx\ge
(1-\eta^2)^2\int_{\Omega}|\nabla u(x)|^{q_j(x)}\,dx,\quad\forall\,u\in W^{1,q_j(\cdot)}(\Omega)
\end{equation}
and, therefore, if
\begin{equation}
\label{5.8a}
u\in \Xi_j\subset W^{1,q_j(\cdot)}(\Omega)\cap L^\infty(\Omega)\quad\text{and}\quad
\mathcal{F}_j(u)<+\infty,
\end{equation}
then
\begin{align}
\notag
\|u\|^\alpha_{W^{1,q_j(\cdot)}}&=\left(\|u\|_{L^{q_j(\cdot)}(\Omega)}+\|\nabla u\|_{L^{q_j(\cdot)}(\Omega;\mathbb{R}^2)}\right)^\alpha\\
\notag
&\le C\left(\|u\|^\alpha_{L^{q_j(\cdot)}(\Omega)}+\|\nabla u\|^\alpha_{L^{q_j(\cdot)}(\Omega;\mathbb{R}^2)}\right)\\
\notag
&\stackrel{\text{by \eqref{A1.2}}}{\le}
C\left(\int_{\Omega} |u(x)|^{q_j(x)}\,dx+\int_{\Omega} |\nabla u(x)|^{q_j(x)}\,dx+2\right)\\
\notag
&\stackrel{\text{by \eqref{5.8a}}}{\le} C\left(|\Omega|C_j^{2}+\int_{\Omega} |\nabla u(x)|^{q_j(x)}\,dx+2\right)\\
\notag
&\stackrel{\text{by \eqref{5.8}}}{\le}
C\left(|\Omega|C_j^{2}+2+\frac{1}{(1-\eta^2)^2}\int_{\Omega}\frac{1}{q_j(x)}|R_{j,\eta}\nabla u(x)|^{q_j(x)}\,dx\right)\\
&\stackrel{\text{by \eqref{5.5ab}}}{\le}
C\left(|\Omega|C_j^{2}+2+\frac{1}{(1-\eta^2)^2}\mathcal{F}_j(u)\right)<+\infty. 
\label{5.9}
\end{align}

On the other side, this term plays the role of a spatial data fidelity. Indeed, what we are going to achieve in this interpolation problem, it is to preserve the
following property for the retrieved images at the Sentinel-2 resolution level: the geometry of each spectral channel of the retrieved image
has to be as close as possible to the geometry of the predicted structural prototype $\widehat{S}_{t_M}:G_H\rightarrow \mathbb{R}^m$  that we obtain either as a solution of the problem \eqref{4.1a}--\eqref{4.1c}, \eqref{4.7a}, or as a result of the iterative procedure \eqref{2.3}. Formally, it means that relations 
\begin{equation}
\label{3.8}
\left(\theta_j^\perp,\nabla u\right)=0\quad  \text{a.e. in $\Omega$}, \ \forall\, j\in J_1,
\end{equation}
have to be satisfied. Hence, the magnitude $\int_{\Omega}\Big|\left(\theta_j^{\perp},\nabla u\right)\Big|\,dx$
must be small enough for each spectral channel, where $\theta_j$ stands for the vector field of unit normals to the topographic map of the predicted band $\widehat{S}_{t_M,j}:G_H\rightarrow \mathbb{R}$. In order to achieve this property, we observe that the expression $R_{j,\eta}\nabla u$ can be reduced to $(1-\eta^2)\nabla u$ in those places of  $\Omega$ where $\nabla u$ is co-linear to the unit normal $\theta_j$, and to $\nabla u$ if $\nabla u$ is orthogonal to $\theta_j$. 

Thus, gradients of the   intensities $u$ that are aligned/co-linear to $\theta_j$ are favored as long as $|\theta_j|>0$. Moreover, this property is enforced by the special choice of the exponent $q_j(x)$.
Since $q_j(x)\approx 1$ in places in $\Omega$ where edges or discontinuities are present in the predicted band $\widehat{S}_{t_M,j}$, and $q_j(x)\approx 2$ in places where $\widehat{S}_{t_M,j}$ is smooth or contains homogeneous features, the main benefit of the energy functional \eqref{5.5ab} is the manner in which it accommo\-dates the local image information. For the places where the gradient of $\widehat{S}_{t_M,j}$ is sufficiently large (i.e. likely edges), we deal with the so-called directional TV-based diffusion \cite{Bung1,Bung2}, whereas in the places where the gradient of $\widehat{S}_{t_M,j}$ is close to zero (i.e. homogeneous regions), the model becomes isotropic. Specifically, the type of anisotropy at these ambiguous regions varies according to the strength of the gradient.  Apparently,  the idea to involve the norm of $W^{1,q_j(\cdot)}(\Omega)$ with a variable exponent $q_j(x)$ was firstly proposed in \cite{Blom2} in order to reduce the staircasing effect in the TV image restoration problem.

As for the second term in \eqref{5.5ab}, it reflects the fact that the topographic map of the retrieved image should be as close as possible to the topographic map of predicted structural prototype $\widehat{S}_{t_M}:G_H\rightarrow \mathbb{R}^m$. We interpret this closedness in its simplified form, namely, in the sense of $L^2$-norm of the difference of the corresponding gradients.

It remains to say a few words about the last term in \eqref{5.5ab}.  Basically, this term represents an $L^2$-distortion between a $j$-th spectral channel in the MODIS image $M=\left[M_1,M_2,\dots,M_6\right]:G_L\to\mathbb{R}^6$ and the corresponding channel of the retrieved image $u^0_j$ which is resampled to the grid of low resolution $G_L$. 


\section{Existence Result and Optimality Conditions for Constrained Minimization Problem  $\left(\mathcal{P}_k\right)$}
\label{Sec 7}

Our main intention in this section is to show that, for each $j\in J_1$,  constrained minimization problem \eqref{5.5aa}--\eqref{5.5ab} is consistent and admits at least one solution. Because of the specific form of the energy functional $\mathcal{F}_j(u)$, the minimization problem \eqref{5.5aa}--\eqref{5.5ab} is rather challenging and we refer to \cite{Blom1,Blom2,Bung1,Chen} for some specific details.

Following in many aspects the recent studies \cite{CK2,K3} (see also \cite{DAKOMA2022,DAKOMAUV2022,DAKOKUMA2022,HK}), we can give the following existence result.
\begin{theorem}
	\label{Th 7.1}
	Let $\widehat{S}_{t_M}:G_H\rightarrow \mathbb{R}^m$ be a given structural prototype for unknown image ${S}_{t_M}$ from Sentinel-2. 
	Then for any given $j\in J_1$, $\mu>0$, $\gamma>0$, $\vartheta>0$, and $\eta\in (0,1)$,  the minimization problem \eqref{5.5aa}--\eqref{5.5ab} admits a unique solution $u_j^0\in \Xi_j$.
\end{theorem}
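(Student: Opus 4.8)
The statement asks for existence and uniqueness of a minimizer of $\mathcal{F}_j$ over $\Xi_j$. The plan is to prove existence by the direct method of the calculus of variations, then uniqueness by a convexity/strict-convexity argument. First I would verify that the problem is well-posed: $\Xi_j$ is nonempty (it contains the admissible structural prototype $\widehat{S}_{t_M,j}$, or at least a suitable truncation of it), and $\mathcal{F}_j$ is bounded below by zero since every term is a nonnegative integral. Hence the infimum $\inf_{u\in\Xi_j}\mathcal{F}_j(u)$ is finite, and I can take a minimizing sequence $\{u_n\}\subset\Xi_j$ with $\mathcal{F}_j(u_n)\to\inf_{\Xi_j}\mathcal{F}_j$.

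The next step is to extract a suitable limit. By definition of $\Xi_j$ we have $0\le u_n(x)\le C_j$ a.e., so $\{u_n\}$ is bounded in $L^\infty(\Omega)$. The crucial a priori bound is the chain of estimates culminating in \eqref{5.9}, which shows that $\mathcal{F}_j(u_n)\le$ const forces $\|u_n\|_{W^{1,q_j(\cdot)}(\Omega)}$ to stay bounded; here I use Lemma~\ref{Lemma 1.7} (via \eqref{1.7a}) to guarantee $1<\alpha\le q_j(x)\le 2$ so that $W^{1,q_j(\cdot)}(\Omega)$ is a reflexive Banach space with the coercivity needed for \eqref{5.9} to apply. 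Reflexivity then yields a subsequence and a limit $u^0_j\in W^{1,q_j(\cdot)}(\Omega)$ with $u_n\rightharpoonup u^0_j$ weakly; the $L^\infty$ bound passes to the limit so $u^0_j\in\Xi_j$, and along the subsequence $u_n\to u^0_j$ strongly in $L^2(\Omega)$ by compact embedding. The final step of the existence argument is lower semicontinuity: the second, third, and fourth terms of $\mathcal{F}_j$ in \eqref{5.5ab} are continuous (or weakly lower semicontinuous) quadratic functionals in $\nabla u$, $u$, respectively, and the fourth term is a finite sum over $G_L$ of squared linear expressions in $u$, stable under $L^2$ convergence. The first term, $\int_\Omega q_j(x)^{-1}|R_{j,\eta}\nabla u|^{q_j(x)}\,dx$, is the delicate one; since $v\mapsto R_{j,\eta}\nabla v$ is a bounded linear map by \eqref{5.3} and $z\mapsto |z|^{q_j(x)}/q_j(x)$ is convex in $z$ with a variable exponent bounded in $[\alpha,2]$, weak lower semicontinuity in the variable-exponent setting applies (this is the standard convexity-plus-growth criterion for integrands of nonstandard growth). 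Combining these gives $\mathcal{F}_j(u^0_j)\le\liminf_n\mathcal{F}_j(u_n)=\inf_{\Xi_j}\mathcal{F}_j$, so $u^0_j$ is a minimizer.

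For uniqueness, I would argue by strict convexity. The set $\Xi_j$ is convex. The fourth term is convex (a sum of squares of affine functionals), and the second and third terms are strictly convex in $\nabla u$ and in $u|_{\Omega\setminus D_{i^\ast}}$ respectively. The first term is convex because $z\mapsto|R_{j,\eta}z|^{q_j(x)}/q_j(x)$ is a convex function of $z$ for each fixed $x$ (composition of a linear map with a convex power, $q_j(x)>1$). Hence $\mathcal{F}_j$ is convex on $\Xi_j$, and the presence of the strictly convex Dirichlet term $\frac{\mu}{2}\int_\Omega|\nabla u-\nabla\widehat{S}_{t_M,j}|^2\,dx$ makes it strictly convex with respect to $\nabla u$; together with the $L^2$ fidelity term controlling $u$ itself on $\Omega\setminus D_{i^\ast}$, strict convexity in the full $W^{1,q_j(\cdot)}$ norm follows, which rules out two distinct minimizers. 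I expect the main obstacle to be the weak lower semicontinuity of the leading variable-exponent term: one must ensure the exponent $q_j(\cdot)$ is genuinely fixed (it depends only on the given prototype $\widehat{S}_{t_M,j}$, not on the unknown $u$, which is what makes the problem tractable, unlike the evolutionary model of Section~\ref{Sec 4}) and log-Hölder continuous so that the standard lower-semicontinuity and density results for Sobolev-Orlicz spaces with variable exponent (Appendix C) are available; this regularity of $q_j$ is precisely what Lemma~\ref{Lemma 1.7} and the smoothness of $G_\sigma$ in \eqref{5.1} provide.
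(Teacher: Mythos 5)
Your overall strategy --- direct method in the reflexive space $W^{1,q_j(\cdot)}(\Omega)$ using the coercivity estimate \eqref{5.9}, weak closedness of the convex obstacle set $\Xi_j$, weak lower semicontinuity of the variable-exponent term (which is exactly what Proposition~\ref{AProp 1.5} and \eqref{A1.5.new} in the Appendix supply, applied to $f_k=R_{j,\eta}\nabla u_k$), and uniqueness via strict convexity --- is the same route the paper intends; the paper itself does not write the proof out, but delegates it to the cited references and to the strict-convexity remark preceding Theorem~\ref{Th 7.9}. Your observation that $q_j$ is a fixed, Lipschitz (hence log-H\"older) exponent determined by the prototype, so that the Appendix machinery applies, is exactly the right point to make.

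There is, however, one concrete gap in your uniqueness argument. You pin down the additive constant through the fidelity term $\frac{\gamma}{2}\int_{\Omega\setminus D_{i^\ast}}|u-S_{t_M,j}|^2\,dx$, i.e.\ you argue that strict convexity of the gradient terms forces $\nabla u_1=\nabla u_2$, hence $u_1-u_2=c$ on the connected set $\Omega$, and then the $\gamma$-term kills $c$. This works for the restoration problem (A1), where $|\Omega\setminus D_{i^\ast}|\ge 0.4\,|\Omega|>0$. But the theorem is invoked in \eqref{5.6}--\eqref{5.7} for the interpolation and extrapolation problems with $D_{i^\ast}=\Omega$, in which case the third term of \eqref{5.5ab} is an integral over the empty set and vanishes identically, so ``the $L^2$ fidelity term controlling $u$ itself'' is simply not there and your claimed strict convexity ``in the full $W^{1,q_j(\cdot)}$ norm'' fails as stated. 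To close this, you must use the last term of \eqref{5.5ab}: it is a finite sum of squares of affine functionals of $u$, and along the midpoint of two minimizers it is strictly decreased unless $(\mathcal{K}\ast u_1)(z)=(\mathcal{K}\ast u_2)(z)$ for all $z\in G_L$; since $u_1-u_2=c$ and the kernel $\mathcal{K}$ has nonzero mass, this forces $c=0$ (alternatively, one can note that the constraint $0\le u\le C_j$ alone does not exclude constants, so this step is genuinely needed). A secondary point worth a sentence in a full write-up: the term $\int_\Omega\Pi_{S_L}(\cdot)^2\,dx$ involves point evaluations of $\mathcal{K}\ast u$, which are only meaningful (and $L^2$-stable, as you assert) if $\mathcal{K}\ast u$ is understood as a local averaging/integral operator rather than a purely discrete stencil applied to a Sobolev function.
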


In order to derive some optimality conditions to the problem \eqref{5.5aa}--\eqref{5.5ab} 
and charac\-terize its solution $u_j^0\in W^{1,q_j(\cdot)}(\Omega)$, we show that the cost functional  $\mathcal{F}_j:\Xi_j\rightarrow\mathbb{R}$ is G\^{a}teaux differentiable. To this end, we note that, for arbitrary $v\in W^{1,q_j(\cdot)}(\Omega)$, the following assertion 
\begin{multline*}
\frac{|R_{j,\eta}\nabla u^0_j(x)+tR_\eta\nabla v(x)|^{q_j(x)}-|R_{j,\eta}\nabla u^0_j(x)|^{q_j(x)}}{q_j(x) t}\\
\rightarrow
\left(|R_{j,\eta}\nabla u^0_j(x)|^{q_j(x)-2}R_{j,\eta}\nabla u^0_j(x),R_{j,\eta}\nabla v(x)\right)\ \text{ as }\ t\to 0
\end{multline*}
holds almost everywhere in $\Omega$. Indeed, by convexity,
\[
|\xi|^{q_j(x)}-|\eta|^{q_j(x)}\le 2q_j(x)\left(|\xi|^{q_j(x)-1}+|\eta|^{q_j(x)-1}\right)|\xi-\eta|,
\]
it follows that
\begin{multline}
\left|\frac{|R_{j,\eta}\nabla u^0_j(x)+tR_{j,\eta}\nabla v(x)|^{q_j(x)}-|R_{j,\eta}\nabla u^0_j(x)|^{q_j(x)}}{q_j(x) t}\right|\\
\le 2\left(|R_{j,\eta}\nabla u^0_j(x)+tR_{j,\eta}\nabla v(x)|^{q_j(x)-1}+|R_{j,\eta}\nabla u^0_j(x)|^{q_j(x)-1}\right)|R_{j,\eta}\nabla v(x)|\\
\le \mathrm{const}\, \left(|R_{j,\eta}\nabla u^0_j(x)|^{q_j(x)-1}+|R_{j,\eta}\nabla v(x)|^{q_j(x)-1}\right)|R_{j,\eta}\nabla v(x)|.
\label{7.1.3}
\end{multline}
Taking into account that
\begin{multline*}
\int_\Omega |R_{j,\eta}\nabla u^0_j(x)|^{q_j(x)-1}|R_{j,\eta}\nabla v(x)|\,dx\\
\stackrel{\text{by \eqref{AKog_1.2.1}}}{\le}
2\|R_{j,\eta}\nabla u^0_j(x)|^{q_j(x)-1}\|_{L^{(q_j)^\prime(\cdot)}(\Omega)}\|R_{j,\eta}\nabla v(x)|\|_{L^{q_j(\cdot)}(\Omega)}\\
\le 2\|\nabla u^0_j(x)|^{q_j(x)-1}\|_{L^{(q_j)^\prime(\cdot)}(\Omega,\mathbb{R}^2)}\|\nabla v(x)\|_{L^{q_j(\cdot)}(\Omega,\mathbb{R}^2)},
\end{multline*}
and
\begin{align*}
\int_\Omega |R_{j,\eta}\nabla v(x)|^{q_j(x)}\,dx\stackrel{\text{by \eqref{5.8.0}}}{\le}
\int_\Omega |\nabla v(x)|^{q_j(x)}\,dx& \le \|\nabla v\|^2_{L^{q_j(\cdot)}(\Omega,\mathbb{R}^2)}+1,
\end{align*}
we see that the right hand side of inequality \eqref{7.1.3} is an $L^1(\Omega)$ function. Therefore,
\begin{multline*}
\int_\Omega\frac{|R_{j,\eta}\nabla u^0_j(x)+tR_{j,\eta}\nabla v(x)|^{q_j(x)}-|R_{j,\eta}\nabla u^0_j(x)|^{q_j(x)}}{q_j(x) t}\,dx\\
\rightarrow
\int_\Omega\left(|R_{j,\eta}\nabla u^0_j(x)|^{q_j(x)-2}R_{j,\eta}\nabla u^0_j(x),R_{j,\eta}\nabla v(x)\right)\,dx\ \text{ as }\ t\to 0
\end{multline*}
by the Lebesgue dominated convergence theorem.

Since the cost functional $\mathcal{F}_j:\Xi_j\rightarrow\mathbb{R}$ can be cast in the form
\[
P_k(u)=\mathcal{A}_j(u)+\mu \mathcal{B}_j(u) +\gamma\mathcal{C}_j(u) +\vartheta \mathcal{D}_j(u),
\]
where
\begin{align*}
\mathcal{A}_j(u)&=\int_{\Omega}\frac{1}{q_j(x)}|R_{j,\eta}\nabla u(x)|^{q_j(x)}\,dx,\\
\mathcal{B}_j(u)&=\frac{1}{2}\int_{\Omega}\left|\nabla u(x) -\nabla \widehat{S}_{t_M,j}(x)\right|^2\,dx,\\
\mathcal{C}_j(u)&=\frac{1}{2}\int_{\Omega\setminus D_{i^\ast}}\left|u(x) -S_{t_M,j}(x)\right|^2\,dx,\\
\mathcal{D}_j(u)&=\frac{1}{2}\int_{\Omega}\Pi_{S_L}\Big( \left[\mathcal{K}\ast u-M_{t_M,j}\right]\Big)^2\,dx,
\end{align*}
we deduce that
\begin{equation}
\mathcal{A}_j^\prime(u^0_j)[v]=
\int_{\Omega}\left(|R_{j,\eta}\nabla u^0_j(x)|^{q_j(x)-2}R_{j,\eta}\nabla u^0_j(x),R_{j,\eta}\nabla v(x)\right)_{\mathbb{R}^2}\,dx,
\end{equation}
for each $v\in W^{1,q_j(\cdot)}(\Omega)$.

As for the rest terms $\mu \mathcal{B}_j(u)$, $\gamma\mathcal{C}_j(u)$, and $\lambda \mathcal{D}_j(u)$ in the cost functional  $\mathcal{F}_j:\Xi_j\rightarrow\mathbb{R}$, utilizing the similar arguments, we have the following representation for their G\^{a}teaux derivatives.
\begin{proposition}
	\label{Prop 7.4}
	For a given MODIS  image $M:G_L\to\mathbb{R}^6$,  the functionals $\mathcal{B}_j,\mathcal{C}_j, \mathcal{D}_j:L^2(\Omega)\rightarrow \mathbb{R}$ are convex and G\^{a}teaux  differentiable in $L^2(\Omega)$ with
	\begin{align}
	\label{7.5b}
	\mathcal{B}_k^\prime(u^0_j)[v]&=
	\int_{\Omega}\left(\nabla u^0_j(x) -\nabla \widehat{S}_{t_M,j}(x),\nabla v(x)\right)\,dx,\\
	\mathcal{C}_k^\prime(u^0_j)[v]&=
	\int_{\Omega\setminus D_{i^\ast}}\left(u^0_j(x) - S_{t_M,j}(x)\right) v(x)\,dx,\\
	\notag
	\mathcal{D}_k^\prime(u^0_j)[v]&=
	\int_{\Omega} \Pi_L\left( \left[\mathcal{K}\ast u^0_j\right]-M_{t_M,j}\right)\left[\mathcal{K}\ast v\right]\,dx\\
	&=
	\int_{\Omega} \Pi_L \left[\mathcal{K}^\ast\ast \left( \left[\mathcal{K}\ast u^0_j\right]-M_{t_M,j}\right)\right] v\,dx,	
	\label{7.5c}
	\end{align}
	for all $v\in W^{1,q_j(\cdot)}(\Omega)$.
\end{proposition}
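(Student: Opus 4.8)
The plan is to exploit the fact that all three functionals share the common structure $\Psi(u)=\tfrac12\|Tu-f\|_H^2$, where $T$ is a linear operator into a Hilbert space $H$ and $f\in H$ is fixed. Concretely, for $\mathcal{B}_j$ one takes $T=\nabla$ into $H=L^2(\Omega;\mathbb{R}^2)$ with $f=\nabla\widehat{S}_{t_M,j}$; for $\mathcal{C}_j$ the restriction operator $u\mapsto u|_{\Omega\setminus D_{i^\ast}}$ into $H=L^2(\Omega\setminus D_{i^\ast})$ with $f=S_{t_M,j}$; and for $\mathcal{D}_j$ the operator $T=\Pi_{S_L}(\mathcal{K}\ast\,\cdot\,)$, so that the Dirac-comb term collapses to the finite sum $\tfrac12\sum_{z\in G_L}\bigl([\mathcal{K}\ast u](z)-M_{t_M,j}(z)\bigr)^2$ with $f=\Pi_{S_L}M_{t_M,j}$. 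Once this form is recognized, convexity is immediate: the map $u\mapsto Tu-f$ is affine and $\tfrac12\|\cdot\|_H^2$ is convex, so each functional is a convex function composed with an affine map and is therefore convex.

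For G\^{a}teaux differentiability I would expand the difference quotient directly. Fixing $u^0_j$ and a direction $v$, bilinearity of the inner product yields the exact identity
\[
\Psi(u^0_j+tv)=\Psi(u^0_j)+t\,\bigl(Tu^0_j-f,\,Tv\bigr)_H+\tfrac{t^2}{2}\|Tv\|_H^2,
\]
so that $t^{-1}\bigl(\Psi(u^0_j+tv)-\Psi(u^0_j)\bigr)\to\bigl(Tu^0_j-f,\,Tv\bigr)_H$ as $t\to0$. Reading off $T$ in each case gives the three formulas \eqref{7.5b}--\eqref{7.5c} at once. For the alternative representation of $\mathcal{D}_j^\prime$ in the last line of \eqref{7.5c}, I would transfer the convolution onto the test function through its adjoint: the operator $u\mapsto\mathcal{K}\ast u$ has adjoint $\mathcal{K}^\ast\ast\,\cdot\,$ (convolution with the reflected kernel), so that $\bigl(Tu^0_j-f,\,Tv\bigr)_H=\int_\Omega\Pi_{S_L}\bigl[\mathcal{K}^\ast\ast\bigl([\mathcal{K}\ast u^0_j]-M_{t_M,j}\bigr)\bigr]\,v\,dx$.

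The main obstacle is not this routine computation but the precise meaning of the two singular ingredients, which I would address carefully. First, the gradient $T=\nabla$ is unbounded on $L^2(\Omega)$, so the phrase \emph{differentiable in $L^2(\Omega)$} has to be understood on the domain where $\mathcal{B}_j$ is finite; by the coercivity estimate \eqref{5.9} the relevant point $u^0_j$ lies in $W^{1,q_j(\cdot)}(\Omega)$ and the finiteness of $\mu\mathcal{B}_j$ forces $\nabla u^0_j\in L^2(\Omega;\mathbb{R}^2)$, so the inner product $\bigl(\nabla u^0_j-\nabla\widehat{S}_{t_M,j},\,\nabla v\bigr)$ is well defined for admissible directions. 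Second, the Dirac comb $\Pi_{S_L}$ is a measure, so $\mathcal{D}_j$ must be read as a sum of point evaluations of $\mathcal{K}\ast u$; this is legitimate because $\mathcal{K}\ast u$ is defined at the nodes of $G_L$ by the discrete convolution of assumption~(d), and the adjoint identity $\langle\mathcal{K}\ast u,\,w\rangle=\langle u,\,\mathcal{K}^\ast\ast w\rangle$ used above is then simply a rearrangement of the finite double sum defining $\mathcal{K}$.
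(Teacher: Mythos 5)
Your proposal is correct and follows essentially the same route as the paper, which does not write out a separate proof of Proposition~\ref{Prop 7.4} but simply invokes ``similar arguments'' to the difference-quotient computation given for $\mathcal{A}_j$; for these quadratic terms your exact expansion $\Psi(u^0_j+tv)=\Psi(u^0_j)+t(Tu^0_j-f,Tv)_H+\tfrac{t^2}{2}\|Tv\|_H^2$ makes the dominated-convergence step unnecessary and is the natural way to fill in the details. Your remarks on the unboundedness of $\nabla$ on $L^2(\Omega)$ (so that differentiability must be read along directions $v\in W^{1,q_j(\cdot)}(\Omega)$, as the statement itself indicates) and on reading $\Pi_{S_L}$ as a finite sum of point evaluations are sound clarifications of points the paper leaves implicit.
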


Thus, in order to derive some optimality conditions for the minimizer  $u^0_j\in W^{1,q_j(\cdot)}(\Omega)$ to the problem $\inf\limits_{u\in \Xi_j} \mathcal{F}_j(u)$, we note that $\Xi_j$ is a nonempty convex subset of $W^{1,q_j(\cdot)}(\Omega)\cap L^\infty(\Omega)$ and the objective functional $\mathcal{F}_j: \Xi_j\rightarrow\mathbb{R}$ is strictly convex. Hence, the well known classical result (see \cite[Theorem~1.1.3]{Lions71}) leads us to the following conclusion.

\begin{theorem}
	\label{Th 7.9}
	Let $\widehat{S}_{t_M}:G_H\rightarrow \mathbb{R}^m$ be a given structural prototype for unknown image ${S}_{t_M}$ from Sentinel-2. Let $M:G_L\to\mathbb{R}^6$ be a given  MODIS  image. Let $q_j$ stands for the texture index of the $j$-th band for the predicted structural prototype $\widehat{S}_{t_M}$ (see \eqref{5.1}).
	Then the unique minimizer $u^0_j\in \Xi_j$ to the minimization problem $\inf\limits_{u\in \Xi_j}\mathcal{F}_j(u)$ is characterized by the following variational inequality
	\begin{multline}
	\label{7.21}
	\int_{\Omega}\left(\Big|R_{j,\eta}\nabla u^0_j(x)\Big|^{q_j(x)-2}R_{j,\eta}\nabla u^0_j(x),R_{j,\eta}\nabla v(x)-R_{j,\eta}\nabla u^0_j(x)\right)\,dx\\
	+\mu
	\int_{\Omega}\left(\nabla u^0_j(x) -\nabla \widehat{S}_{t_M,j}(x),\nabla v(x)\right)\,dx\\
	+\gamma
	\int_{\Omega\setminus D_{i^\ast}}\left(u^0_j(x) - S_{t_M,j}(x)\right) v(x)\,dx\\
	+\vartheta\int_{\Omega} \Pi_L \left[\mathcal{K}^\ast\ast \left( \left[\mathcal{K}\ast u^0_j\right]-M_{t_M,j}\right)\right] v\,dx\ge 0,\	\quad\forall\,v\in \Xi_j.
	\end{multline}
\end{theorem}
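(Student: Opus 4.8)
The plan is to read Theorem~\ref{Th 7.9} as a direct application of the classical first-order characterization of the minimizer of a convex, G\^{a}teaux-differentiable functional over a convex admissible set, since all the analytical groundwork has already been carried out in the computation of $\mathcal{A}_j^\prime$ preceding the statement and in Proposition~\ref{Prop 7.4}.

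First I would record the three hypotheses that activate the classical result. By definition, $\Xi_j=\left\{u\in W^{1,q_j(\cdot)}(\Omega):0\le u\le C_j\ \text{a.e. in}\ \Omega\right\}$ is a nonempty convex subset of $W^{1,q_j(\cdot)}(\Omega)$; by Theorem~\ref{Th 7.1} the functional $\mathcal{F}_j$ is strictly convex on $\Xi_j$ and admits the unique minimizer $u_j^0$; and the dominated-convergence argument given just before the statement, together with Proposition~\ref{Prop 7.4}, shows that $\mathcal{F}_j=\mathcal{A}_j+\mu\mathcal{B}_j+\gamma\mathcal{C}_j+\vartheta\mathcal{D}_j$ is G\^{a}teaux differentiable on $\Xi_j$, the four summands contributing the derivatives displayed above.

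Next I would invoke \cite[Theorem~1.1.3]{Lions71}: for a convex, G\^{a}teaux-differentiable functional on a convex set, a point $u_j^0\in\Xi_j$ is the minimizer if and only if $\mathcal{F}_j^\prime(u_j^0)[v-u_j^0]\ge 0$ for every $v\in\Xi_j$. To keep the argument self-contained I would reprove both implications by elementary means. For necessity, the convexity of $\Xi_j$ guarantees $u_j^0+t\left(v-u_j^0\right)\in\Xi_j$ for all $t\in[0,1]$, so the scalar map $t\mapsto\mathcal{F}_j\left(u_j^0+t\left(v-u_j^0\right)\right)$ attains its minimum at $t=0$ and therefore has a nonnegative right derivative there, which by G\^{a}teaux differentiability equals $\mathcal{F}_j^\prime(u_j^0)[v-u_j^0]$. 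For sufficiency I would use the convexity inequality $\mathcal{F}_j(v)\ge\mathcal{F}_j(u_j^0)+\mathcal{F}_j^\prime(u_j^0)[v-u_j^0]$.

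Finally I would expand $\mathcal{F}_j^\prime(u_j^0)[v-u_j^0]\ge 0$ into its four constituents, each tested against the admissible direction $v-u_j^0$. Using the linearity of $R_{j,\eta}$, the $\mathcal{A}_j$-term gives $\int_\Omega\left(|R_{j,\eta}\nabla u_j^0|^{q_j(x)-2}R_{j,\eta}\nabla u_j^0,\,R_{j,\eta}\nabla v-R_{j,\eta}\nabla u_j^0\right)dx$, while $\mu\mathcal{B}_j^\prime$, $\gamma\mathcal{C}_j^\prime$ and $\vartheta\mathcal{D}_j^\prime$ (the latter written in the adjoint form through $\mathcal{K}^\ast$) supply the remaining three terms, and assembling them yields \eqref{7.21}. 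I do not expect a genuine obstacle: the only delicate point, namely the existence of the G\^{a}teaux derivative of the nonstandard-growth term $\mathcal{A}_j$ in the variable-exponent space $W^{1,q_j(\cdot)}(\Omega)$, has already been settled before the statement via the convexity estimate and the Lebesgue dominated convergence theorem, with the difference quotients dominated uniformly thanks to \eqref{5.8.0} and the $L^\infty$-bound built into $\Xi_j$.
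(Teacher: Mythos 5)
Your proposal is correct and follows essentially the same route as the paper: the paper likewise computes $\mathcal{A}_j^\prime$ via the convexity estimate and dominated convergence, collects the remaining derivatives from Proposition~\ref{Prop 7.4}, observes that $\Xi_j$ is convex and $\mathcal{F}_j$ strictly convex, and concludes by citing \cite[Theorem~1.1.3]{Lions71} (your elementary reproof of that citation is the only addition). The one point worth noting is that your expansion correctly tests every summand against the direction $v-u^0_j$, whereas \eqref{7.21} as printed retains only $v$ in the last three integrals --- a small inconsistency in the paper's own statement rather than a flaw in your argument.
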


\begin{remark}
	In practical implementation, it is reasonable to define an optimal solution  $u^0_j\in \Xi_j$  using a 'gradient descent' strategy.  Indeed, observing that
	\begin{multline*}
	\int_{\Omega}\left(\Big|R_{j,\eta}\nabla u^0_j(x)\Big|^{q_j(x)-2}R_{j,\eta}\nabla u^0_j(x),\nabla v(x)\right)\,dx\\
	=-\int_{\Omega}\div\left(\Big|R_{j,\eta}\nabla u^0_j(x)\Big|^{q_j(x)-2}R_{j,\eta}\nabla u^0_j(x)\right)  v\,dx\\
	+
	\int_{\partial\Omega} \left(\Big|R_{j,\eta}\nabla I_{k,i}(x)\Big|^{q_j(x)-2}R_{j,\eta}\nabla u^0_j(x),\nu\right) v\,d\mathcal{H}^1,
	\end{multline*}
	and
	\begin{multline*}
	\int_{\Omega}\left(\Big|R_{j,\eta}\nabla u^0_j(x)\Big|^{q_j(x)-2}R_{j,\eta}\nabla u^0_j(x),\left(\theta,\nabla v\right) \theta\right)\,dx\\
	=-\int_{\Omega}\div\left(\left(\Big|R_{j,\eta}\nabla u^0_j(x)\Big|^{q_j(x)-2}R_{j,\eta}\nabla u^0_j(x),\theta\right)\theta\right)  v\,dx\\
	+
	\int_{\partial\Omega} \left(\Big|R_{j,\eta}\nabla u^0_j(x)\Big|^{q_j(x)-2}R_{j,\eta}\nabla u^0_j(x),\theta\right)\left(\theta,\nu\right) v\,d\mathcal{H}^1,
	\end{multline*}
	we see that
	\begin{multline*}
	\int_{\Omega}\left(\Big|R_{j,\eta}\nabla u^0_j(x)\Big|^{q_j(x)-2}R_{j,\eta}\nabla u^0_j(x),R_{j,\eta}\nabla v(x)\right)\,dx\\
	=\int_{\Omega}\left(\Big|R_{j,\eta}\nabla u^0_j(x)\Big|^{q_j(x)-2}R_{j,\eta}\nabla u^0_j(x),\nabla v-\eta^2 \left(\theta,\nabla v\right) \theta\right)\,dx\\
	=-\int_{\Omega}\div\left(\Big|R_{j,\eta}\nabla u^0_j(x)\Big|^{q_j(x)-2}R_{j,\eta}\nabla u^0_j(x)\right)  v\,dx\\
	+\eta^2 \int_{\Omega}\div\left(\left(\Big|R_{j,\eta}\nabla u^0_j(x)\Big|^{q_j(x)-2}R_{j,\eta}\nabla u^0_j(x),\theta\right)\theta\right)  v\,dx
	\end{multline*}
	provided
	\[
	\left(\Big|R_{j,\eta}\nabla u^0_j(x)\Big|^{q_j(x)-2}R_{j,\eta}\nabla u^0_j,\nu\right)=0\quad\text{on}\ \partial\Omega.
	\]
	
	Thus, following the standard procedure and starting from the initial image $\widehat{S}_{t_M,j}$, we can pass to the following initial value problem for the quasi-linear parabolic equations with Nuemann boundary conditions
	\begin{align}
	\notag
	\frac{\partial u^0_j}{\partial t}-& \div\left(|R_{j,\eta}\nabla u^0_j(x)|^{q_j(x)-2}R_{j,\eta}\nabla u^0_j(x)\right)\\ 
	\notag
	&=-\eta^2 \div\left(\left(\Big|R_{j,\eta}\nabla u^0_j(x)\Big|^{q_j(x)-2}R_{j,\eta}\nabla u^0_j(x),\theta\right)\theta\right)\\
	\notag
	&\quad+\mu \div\left(\nabla u^0_j(x) -\nabla \widehat{S}_{t_M,j}(x)\right)-\gamma\left(u^0_j(x)-{S}_{t_M,j}(x)\right)\\
	\label{7.1}
	&\quad-\vartheta \Pi_L \left[\mathcal{K}^\ast\ast \left( \left[\mathcal{K}\ast u^0_j\right]-M_{t_M,j}\right)\right],\\
	\label{7.3}
	&\left(\Big|R_{j,\eta}\nabla u^0_j(x)\Big|^{q_j(x)-2}R_{j,\eta}\nabla u^0_j,\nu\right)=0\quad\text{on}\ \partial\Omega,\\
	\label{7.4}
	0&\le  u^0_j(x)\le C_j\ \text{a.a. in}\ \Omega,\\
	\label{7.5}
	&u^0_j(0,x)=\widehat{S}_{t_M,j}(x),\quad\forall\,x\in\Omega. 
	\end{align}
In principle, instead of the initial condition \eqref{7.5} we may  consider other image that can be generated from $\widehat{S}_{t_M,j}$ and the bicubic interpolation of the MODIS band $M_{t_M,j}$ onto the entire domain $\Omega$. For instance, it can be one of well-known simple data fusion methods (for the details, we refer to \cite{Rani}).
\end{remark}


\section{Numerical Experiments}
\label{Sec 8}

In order to illustrate the proposed approach for the restoration of satellite multi-spectral images we have used a series of Sentinel-2  images ($725\times 600$ in pixels) over the South Dakota area (USA) with resolution $10 m/pixel$ (see Fig.~\ref{Fig_1} that were captured at different time instances in period from July 10 to July 15, 2021, when the global biophysical processes are rapid enough. So, it was a period of an active vegetation growing, and what is more important, each of these images have been captured at a cloud-free day. As follows from Fig.~\ref{Fig_1}, this region represents a typical agricultural area with medium sides fields of various shapes. Since each of these images contains four bands --- $B_4$ (red), $B_3$ (green), $B_2$ (blue), and $B_{8a}$ (near infrared), we denote these images as $S_1, S_2, S_3:G_H\to\mathbb{R}^4$, respectively. We also have a cloud-free MODIS image $M:G_L\to\mathbb{R}^6$ ($29\times 24$ in pixels) from 2021/08/13 with resolution $250 m/pixel$ (see Fig.~\ref{Fig_2}).

\begin{figure}[ht!]
	\centering
	\includegraphics[width=6.5cm, height=6cm, scale=4.5]{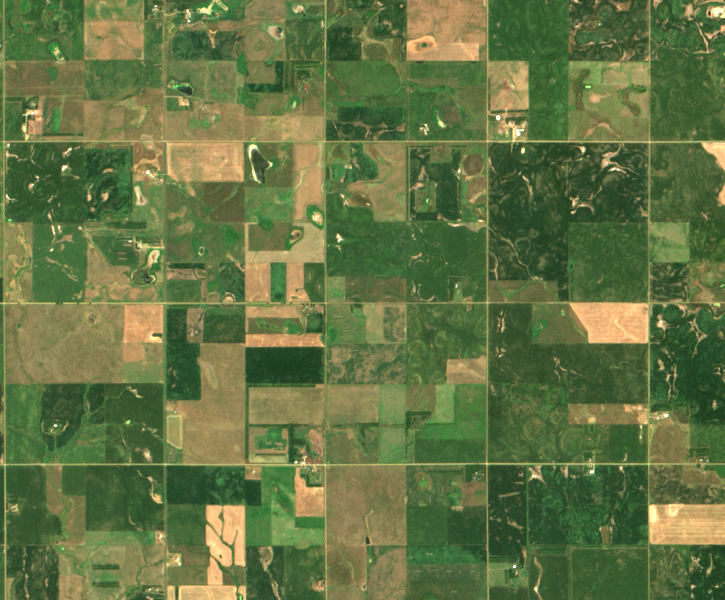}\  
	\includegraphics[width=6.5cm, height=6cm, scale=4.5]{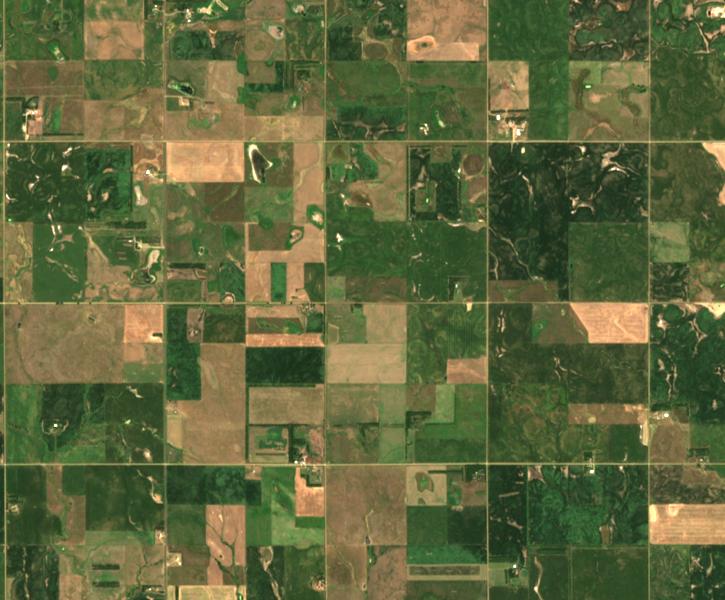}\  
	\includegraphics[width=6.5cm, height=6cm, scale=4.5]{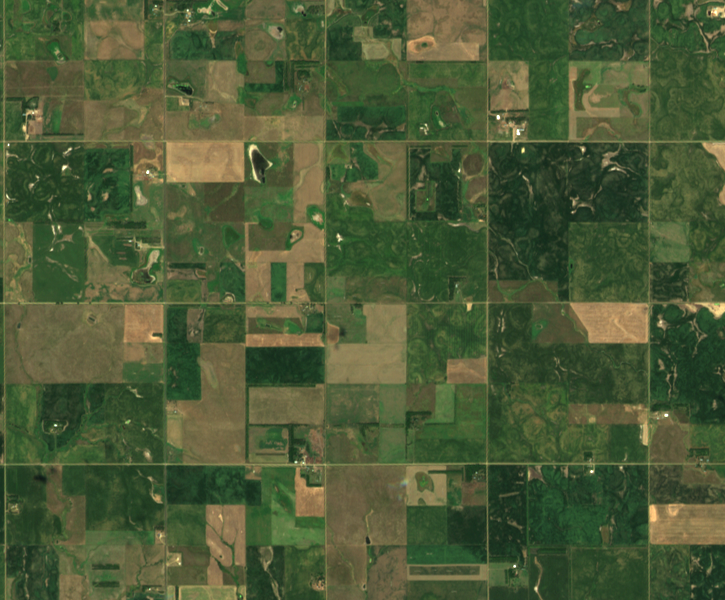}
	\caption{\label{Fig_1}Images from Sentinel-2. Date of generation: (left up)~2021/08/10, (right up)~2021/08/13, , (center down)~2021/08/15}
\end{figure}
\begin{figure}[ht!]
	\centering
	\includegraphics[width=1cm, height=1cm, scale=4.5]{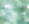} 
	\caption{\label{Fig_2}MODIS image  with the date of generation 2021/08/13}
\end{figure}

To emulate the interpolation problem (A2), we assume that the image $S_2$ is cloud-corrupted. With that in mind, we create some artificial clouds on it (see the left picture in Fig.~\ref{Fig_3}) and consider a new image $\mathcal{S}_2:G_ H\setminus D\to \mathbb{R}^4$ as an image with damage zone $D$ in all bands. So, the problem consists in generation of a new multi-band optical image $\mathcal{S}^{int}_{2}:G_H\to\mathbb{R}^4$ at the Sentinel-level of resolution using result of the fusion of cloud-free MODIS image $M:G_L\to\mathbb{R}^6$ with the predicted structural prototype $\widehat{\mathcal{S}}_{2}:G_H\rightarrow \mathbb{R}^m$ from the given day 2021/08/13. 
\begin{figure}[ht!]
	\centering
	\includegraphics[width=6.5cm, height=6cm, scale=4.5]{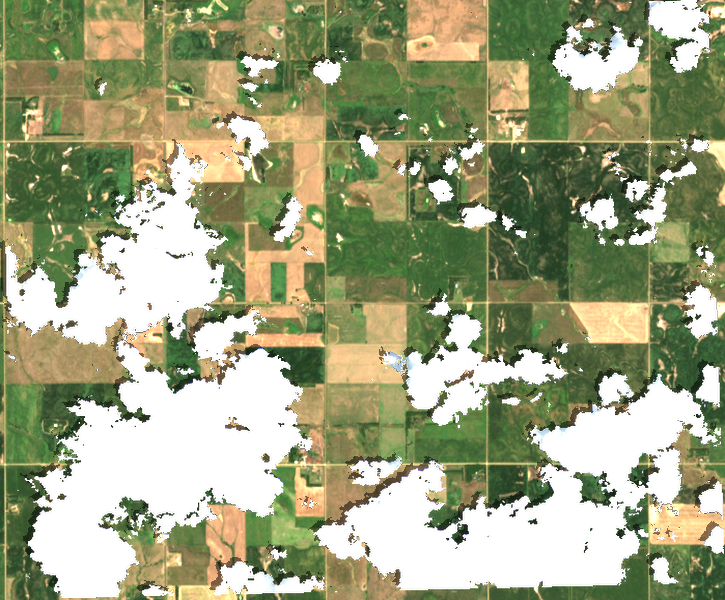}\quad
	\includegraphics[width=6.5cm, height=6cm, scale=4.5]{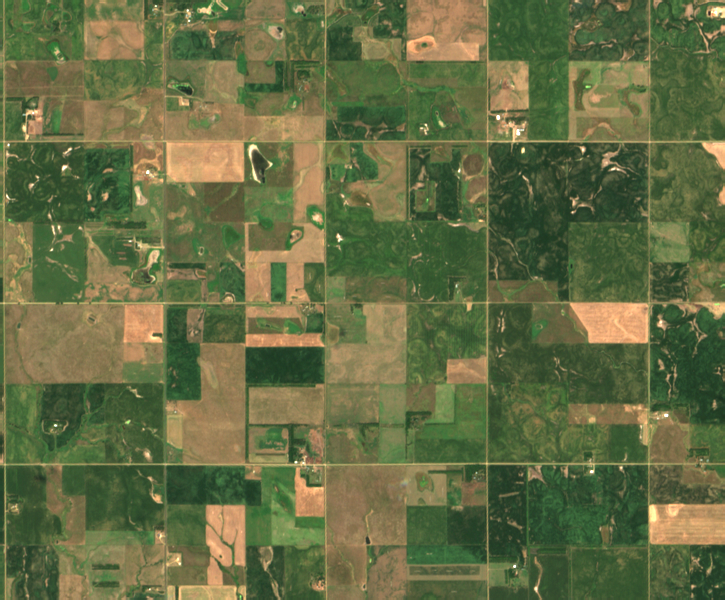}
	\caption{\label{Fig_3} (left)~Image with damage, (right) Its structural prototype}
\end{figure}

At the first step, following the procedure described in Section~4 and utilizing for that the two cloud-free Sentinel images $S_1$ and $S_3$, we create for each band its daily prediction of the topographical map from the day 2021/08/13 (see the right picture in Fig.~\ref{Fig_3}). After that, we realize the fusion procedure of this predicted image with the cloud-free MODIS image $M:G_L\to\mathbb{R}^6$ of the same territory.  In all numerical simulations, we set $h=0.1$, $a=0.01$, $\sigma=1$, $\e=0.001$, $\eta=0.95$, $\mu=2.5$, $\vartheta=1$, $\gamma=0$. As a result, a new image from the date 2021/08/13 at the Sentinel-level of resolution has been generated and it is depicted in Fig.~\ref{Fig_4}. 

\begin{figure}[ht!]
	\centering
	\includegraphics[width=6.5cm, height=6cm, scale=4.5]{13082021_O.png}\quad
	\includegraphics[width=6.5cm, height=6cm, scale=4.5]{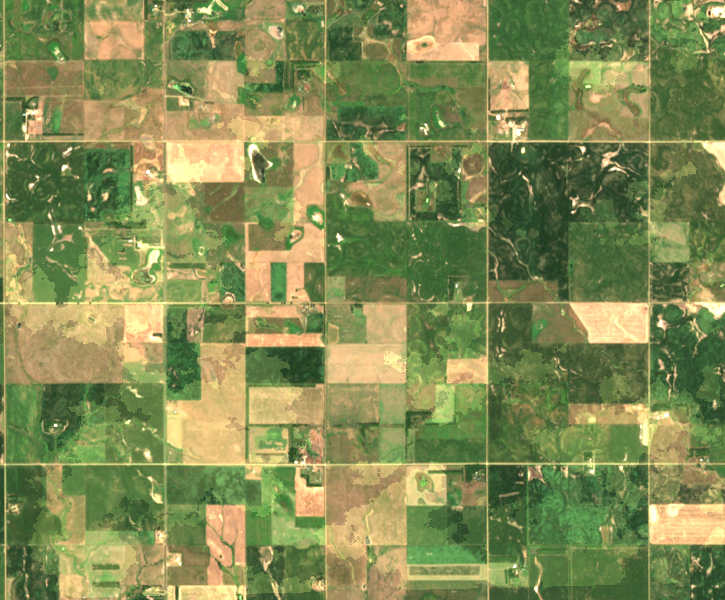}
	\caption{\label{Fig_4} (left)~Original image from Sentinel from 2021/08/13, (right) Result of its restoration}
\end{figure}

To evaluate the given interpolation result, we make use of the following validation metrics: 
 
\begin{center}
	\begin{tabular}{ l | l   }
		\hline
		\textbf{RMSE} &  $\mathrm{RMSE}\left(S_2,\mathcal{S}^{int}_{2}\right)=\frac{1}{|G_H|}\sum_{x\in G_H} \left(S_2(x)-\mathcal{S}^{int}_{2}(x)\right)^2$ \\ \hline
		\textbf{Corr} &  $\mathrm{Corr}\left(S_2,\mathcal{S}^{int}_{2}\right)=\frac{\mathrm{cov}\,\left(S_2,\mathcal{S}^{int}_{2}\right)}{\sqrt{\mathrm{cov}\,\left(S_2,S_2\right) \mathrm{cov}\,\left(\mathcal{S}^{int}_{2},\mathcal{S}^{int}_{2}\right)}}$ \\ \hline
		\textbf{CorrLaplace} &  $\mathrm{CorrLaplace}\left(S_2,\mathcal{S}^{int}_{2}\right)=\mathrm{Corr}\left(\Delta S_2, \Delta \mathcal{S}^{int}_{2}\right)$ \\
		\hline
		\textbf{SSIM} &  The structural similarity index combining  local image structure,\\
		 &  luminance, and contrast into a single local quality score \\
		\hline
		\textbf{HaarPSI} &  The Haar wavelet-based perceptual similarity index \\
		 & that aims to correctly assess the perceptual similarity\\
		   & between two images  with respect to a human viewer \\
		\hline
	\end{tabular}
\end{center}
\vspace{2ex}

\noindent
where the above abbreviations stand for: \textbf{RMSE} (The Root-Mean-Square Deviation), \textbf{Corr} (The Correlation Coefficient), \textbf{CorrLaplace} (The Correlation of Laplace Feature).  

Table~\ref{Table_1} contains the comparison results of the original image $S_2$ with its restored variant $\mathcal{S}^{int}_{2}$ for each band separately, whereas in Table~\ref{Table_2} we present the similarity results for the normalized difference vegetation indices (NDVI) which can be calculated from these images as follows:
\[
NDVI(S_2)=\frac{B_{8a}(S_2)-B_4(S_2)}{B_{8a}(S_2)+B_4(S_2)},\qquad
NDVI(\mathcal{S}^{int}_2)=\frac{B_{8a}(\mathcal{S}^{int}_2)-B_4(\mathcal{S}^{int}_2)}{B_{8a}(\mathcal{S}^{int}_2)+B_4(\mathcal{S}^{int}_2)}.
\]

\begin{table}[ht]
	\caption{ Similarity characteristics between $S_2$ and its restored variant $\mathcal{S}^{int}_{2}$}
	\centering
	\begin{tabular}{ l | c | c | c | c   }
		\hline
		Metrics & $B_2$ & $B_3$ & $B_4$ & $B_{8a}$ \\ \hline
		\textbf{RMSE} & 1669.9299 & 6339.3894 & 5587.9002 & 143613.5512	  \\ \hline
		\textbf{Corr} &  0.9720	& 0.9272 & 0.9769 & 0.9220	 \\ \hline
		\textbf{CorrLaplace} & 0.9490 &	0.8623 & 0.9229 & 0.3107  \\
		\hline
		\textbf{SSIM} & 0.9428 & 0.9094	& 0.9313 & 0.8665   \\
		\hline
		\textbf{HaarPSI} & 0.8008 & 0.6994 & 0.7907 & 0.6498  \\
		\hline
	\end{tabular}
\label{Table_1}
\end{table} 

\begin{table}[ht]
	\caption{ Similarity characteristics between NDVIs for $S_2$ and its restored variant $\mathcal{S}^{int}_{2}$}
	\centering
	\begin{tabular}{ l | c }
		Metrics &  Value \\ \hline
		$\mathrm{RMSE}\left(\mathrm{NDVI}(S_2),\mathrm{NDVI}(\mathcal{S}^{int}_{2})\right)$ & 0.0002  \\ \hline
		$\mathrm{SSIM}\left(\mathrm{NDVI}(S_2),\mathrm{NDVI}(\mathcal{S}^{int}_{2})\right)$  & 0.9999 \\
		\hline
		$\mathrm{HaarPSI}\left(\mathrm{NDVI}(S_2),\mathrm{NDVI}(\mathcal{S}^{int}_{2})\right)$  & 0.9999  \\
		\hline
	\end{tabular}
	\label{Table_2}
\end{table} 

As for the choice of numerical schemes for the problems \eqref{4.7a} and \eqref{7.1}--\eqref{7.5}, their consistency and substantiation, and also other scenario for simulations including the restoration and prediction problems, these issues will be a subject of a forthcoming paper. We will show that the proposed approach is appealing for automated processing of large data sets. 

\section{Conclusion}

We propose a novel  model for the restoration of satellite multi-spectral images. This model is based on  the solutions of special variational problems with nonstandard growth objective functional. 
Because of the risk of information loss in optical images, we do not impose any information about such images inside the damage region, but instead we assume that the texture of these images can be predicted through a number of past cloud-free images of the same region from the time series. So, the characteristic feature of variational problems, which we formulate for each spectral channel separately, is the structure of their objective functionals. On the one hand, we involve into consideration the energy functionals with the nonstandard growth  $p(x)$, where the variable exponent $p(x)$ is unknown a priori and it directly depends on the texture of an image that we are going to restore. On the other hand, the texture of an image $\vec{u}$, we are going to restore, can have rather rich structure in the damage region $D$. In order to identify it, we push forward the following hypothesis: the geometry of each spectral channels of a cloud corrupted image in the damage region is topologically close to the geometry of the total spectral energy that can be predicted with some accuracy by a number of past cloud-free images of the same region. As a result, we impose this requirement in each objective functional in the form of a special fidelity term. In order to study the consistency of the proposed collection of non-convex minimization problems, we develop a special technique and supply this approach by the rigorous mathematical substantiation.

\appendix
\section{On Orlicz Spaces}
\label{SubSec 1.1}
Let $p(\cdot)$ be a measurable exponent function on $\Omega$ such that
$1<\alpha\le p(x)\le\beta<\infty$ a.e. in $\Omega$, where $\alpha$ and $\beta$ are given constants. Let $p^\prime(\cdot)=\frac{p(\cdot)}{p(\cdot)-1}$ be the corresponding conjugate exponent. It is clear that
\[
1\le \underbrace{\frac{\beta}{\beta-1}}_{\beta^\prime}\le p^\prime(x)\le\underbrace{\frac{\alpha}{\alpha-1}}_{\alpha^\prime}\ \ a.e.\  in \ \Omega,
\]
where $\beta^\prime$ and $\alpha^\prime$ stand for the
conjugates of constant exponents. Denote by $L^{p(\cdot)}(\Omega)$ the set of all measurable functions $f(x)$ on $\Omega$ such that $\int_\Omega |f(x)|^{p(x)}\,dx<\infty$. Then $L^{p(\cdot)}(\Omega)$
is a reflexive separable Banach space with respect to the Luxemburg norm (see  \cite{Cruz, DHHR} for the details)
\begin{equation}
\label{A0.8}
\|f\|_{L^{p(\cdot)}(\Omega)}=\inf\left\{\lambda>0\ :\ \rho_p(\lambda^{-1}f)\le 1\right\},\
\end{equation}
where $\rho_p(f):=\int_\Omega |f(x)|^{p(x)}\,dx$.

It is well-known that $L^{p(\cdot)}(\Omega)$ is reflexive provided $\alpha>1$, and its dual is $L^{p^\prime(\cdot)}(\Omega)$, that is, any continuous functional $F=F(f)$ on $L^{p(\cdot)}(\Omega)$ has the form (see \cite[Lemma~13.2]{Zh2011})
\[
F(f)=\int_{\Omega} f g \,dx,\quad \ with \ \ g\in L^{p^\prime(\cdot)}(\Omega).
\] 
As for the infimum in (\ref{A0.8}), we have the following result.
\begin{proposition}
	\label{Kog_Prop A0.8}
	The infimum in (\ref{A0.8}) is attained if $\rho_p(f)>0$. Moreover
	\begin{equation}
	\label{AKog_1.1}
	if \ \lambda_\ast:=\|f\|_{L^{p(\cdot)}(\Omega)}>0,\ \ then \ \ \rho_p(\lambda_\ast^{-1}f)=1.
	\end{equation}
\end{proposition}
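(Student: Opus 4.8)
The plan is to study the real-valued auxiliary function
\[
\psi(\lambda):=\rho_p(\lambda^{-1}f)=\int_\Omega |f(x)|^{p(x)}\lambda^{-p(x)}\,dx,\qquad \lambda>0,
\]
and to show that it is continuous and strictly decreasing on $(0,+\infty)$ with $\psi(0^+)=+\infty$ and $\psi(+\infty)=0$. Once these properties are established, the set $\{\lambda>0:\psi(\lambda)\le 1\}$, whose infimum defines the Luxemburg norm in \eqref{A0.8}, turns out to be a closed half-line $[\lambda_\ast,+\infty)$ with $\psi(\lambda_\ast)=1$, and both assertions of the proposition follow immediately. Throughout I rely on the fact that $f\in L^{p(\cdot)}(\Omega)$ carries $\rho_p(f)<+\infty$, so together with the hypothesis $\rho_p(f)>0$ we have $0<\rho_p(f)<+\infty$ and $|\{f\neq0\}|>0$.

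First I would establish the continuity of $\psi$. Fixing $\lambda_0>0$ and restricting to $\lambda\in[\lambda_0/2,2\lambda_0]$, the two-sided bound $1<\alpha\le p(x)\le\beta<\infty$ gives $\lambda^{-p(x)}\le(\lambda_0/2)^{-p(x)}\le C$ with $C:=\max\{(\lambda_0/2)^{-\alpha},(\lambda_0/2)^{-\beta}\}$, so the integrand is dominated by the fixed integrable function $C|f(x)|^{p(x)}$; the Lebesgue dominated convergence theorem then yields continuity of $\psi$ at $\lambda_0$. Strict monotonicity is immediate, since for every $x$ with $f(x)\neq0$ the scalar map $\lambda\mapsto|f(x)|^{p(x)}\lambda^{-p(x)}$ is strictly decreasing and $\{f\neq0\}$ has positive measure. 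For the limiting behavior I again exploit the bound on $p$: for $\lambda\ge1$ one has $\lambda^{-p(x)}\le\lambda^{-\alpha}$, whence $\psi(\lambda)\le\lambda^{-\alpha}\rho_p(f)\to0$ as $\lambda\to+\infty$, while for $0<\lambda\le1$ one has $\lambda^{-p(x)}\ge\lambda^{-\alpha}$, whence $\psi(\lambda)\ge\lambda^{-\alpha}\rho_p(f)\to+\infty$ as $\lambda\to0^+$.

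The conclusion is then a direct application of the intermediate value theorem. Since $\psi$ is continuous with $\psi(+\infty)=0<1<\psi(0^+)=+\infty$, there is a (unique, by strict monotonicity) point $\lambda_\ast\in(0,+\infty)$ at which $\psi(\lambda_\ast)=1$, and monotonicity forces $\{\lambda>0:\psi(\lambda)\le1\}=[\lambda_\ast,+\infty)$. Consequently the infimum in \eqref{A0.8} equals $\lambda_\ast$ and is attained, and $\rho_p(\lambda_\ast^{-1}f)=\psi(\lambda_\ast)=1$, which is precisely \eqref{AKog_1.1}. I do not anticipate a genuinely hard step; the only place needing mild care is the choice of dominating function in the continuity argument, where the variable exponent prevents bounding $\lambda^{-p(x)}$ by a single power of $\lambda$ and forces the maximum over the two constant exponents $\alpha$ and $\beta$.
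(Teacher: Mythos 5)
Your argument is correct and complete: the continuity of $\psi(\lambda)=\rho_p(\lambda^{-1}f)$ via dominated convergence (with the correct two-sided bound $\max\{(\lambda_0/2)^{-\alpha},(\lambda_0/2)^{-\beta}\}$ handling the variable exponent), the strict monotonicity on the positive-measure set $\{f\neq 0\}$, and the limits at $0^+$ and $+\infty$ together give a unique $\lambda_\ast$ with $\psi(\lambda_\ast)=1$ and $\{\lambda>0:\psi(\lambda)\le 1\}=[\lambda_\ast,+\infty)$, which is exactly the claim. The paper itself offers no proof of this proposition --- it is stated as a known fact with references to the standard monographs on variable Lebesgue spaces --- so there is nothing to compare against; your write-up is the standard self-contained argument one would find in those sources.
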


Taking this result and condition $1\le\alpha\le p(x)\le \beta$ into account, we see that
\begin{align*}
\frac{1}{\lambda^\beta_\ast}\int_\Omega\left|f(x)\right|^{p(x)}\,dx&\le \int_\Omega\left|\frac{f(x)}{\lambda_\ast}\right|^{p(x)}\,dx\le\frac{1}{\lambda^\alpha_\ast}\int_\Omega\left|f(x)\right|^{p(x)}\,dx,\\
\frac{1}{\lambda^\beta_\ast}\int_\Omega\left|f(x)\right|^{p(x)}\,dx&\le 1 \le\frac{1}{\lambda^\alpha_\ast}\int_\Omega\left|f(x)\right|^{p(x)}\,dx.
\end{align*}
Hence, (see \cite{Cruz,DHHR,Zh2008a} for the details)
\begin{eqnarray}
\label{A1.1b}
\|f\|^\alpha_{L^{p(\cdot)}(\Omega)}&\le \int_\Omega |f(x)|^{p(x)}\,dx\le \|f\|^\beta_{L^{p(\cdot)}(\Omega)},\ \ if \ \|f\|_{L^{p(\cdot)}(\Omega)}>1,\nonumber\\
\|f\|^\beta_{L^{p(\cdot)}(\Omega)}&\le \int_\Omega |f(x)|^{p(x)}\,dx\le \|f\|^\alpha_{L^{p(\cdot)}(\Omega)},\ \ if\ \|f\|_{L^{p(\cdot)}(\Omega)}<1,
\end{eqnarray}
and, therefore,
\begin{align}
\label{A1.2}
\|f\|^\alpha_{L^{p(\cdot)}(\Omega)}-1&\le \int_\Omega |f(x)|^{p(x)}\,dx\le \|f\|^\beta_{L^{p(\cdot)}(\Omega)}+1,\quad\forall\, f\in L^{p(\cdot)}(\Omega),\\
\|f\|_{L^{p(\cdot)}(\Omega)}&=\int_\Omega |f(x)|^{p(x)}\,dx,\ \ if\ \ \|f\|_{L^{p(\cdot)}(\Omega)}=1.
\end{align}

The following estimates are well-known (see, for instance, \cite{Cruz,DHHR,Zh2008a}): if $f\in L^{p(\cdot)}(\Omega)$ then
\begin{align}
\label{A1.3}
\|f\|_{L^\alpha(\Omega)}&\le \left(1+|\Omega|\right)^{1/\alpha} \|f\|_{L^{p(\cdot)}(\Omega)},\\
\label{A1.4}
\|f\|_{L^{p(\cdot)}(\Omega)}&\le \left(1+|\Omega|\right)^{1/\beta^\prime}\|f\|_{L^\beta(\Omega)},\quad
\beta^\prime=\frac{\beta}{\beta-1},\quad\forall\,f\in L^\beta(\Omega).
\end{align}

Let $\left\{p_k\right\}_{k\in \mathbb{N}}\subset C^{0,\delta}(\overline{\Omega})$, with some $\delta\in(0,1]$, be a given sequence of exponents. Hereinafter in this subsection we assume that
\begin{eqnarray}
\label{A1.5.0}
p, p_k\in C^{0,\delta}(\overline{\Omega})\ \  for\ k=1,2,\dots,\  \mbox{and} \atop  p_k(\cdot)\rightarrow p(\cdot)\ \mbox{uniformly in}\ \overline{\Omega}\   \mbox{as}\  k\to\infty.
\end{eqnarray}
We associate with this sequence the following collection $\left\{f_k\in L^{p_k(\cdot)}(\Omega)\right\}_{k\in \mathbb{N}}$. The characteristic feature of this set of functions is that each element $f_k$ lives in the corresponding Orlicz space $L^{p_k(\cdot)}(\Omega)$. We say that the sequence $\left\{f_k\in L^{p_k(\cdot)}(\Omega)\right\}_{k\in \mathbb{N}}$ is bounded if
\begin{equation}
\label{A1.3.a}
\limsup_{k\to\infty}\int_{\Omega} |f_k(x)|^{p_k(x)}\,dx<+\infty.
\end{equation}

\begin{definition}
	\label{ADef 1.3}
	A bounded sequence $\left\{f_k\in L^{p_k(\cdot)}(\Omega)\right\}_{k\in \mathbb{N}}$ is weakly convergent in the variable Orlicz space $L^{p_k(\cdot)}(\Omega)$ to a function $f\in L^{p(\cdot)}(\Omega)$, where
	$p\in C^{0,\delta}(\overline{\Omega})$ is the limit of $\left\{p_k\right\}_{k\in \mathbb{N}}\subset C^{0,\delta}(\overline{\Omega})$ in the uniform topology of $C(\overline{\Omega})$, if
	\begin{equation}
	\label{A1.3.b}
	\lim_{k\to\infty} \int_{\Omega} f_k \varphi\,dx=\int_{\Omega} f \varphi\,dx,\quad \forall\, \varphi\in C^\infty_0(\mathbb{R}^N).
	\end{equation}	
\end{definition}

We make use of the following result (we refer to \cite[Lemma~13.3]{Zh2011} for comparison) concerning the lower semicontinuity property of the variable $L^{p_k(\cdot)}$-norm  with respect to the weak convergence in $L^{p_k(\cdot)}(\Omega)$.

\begin{proposition}
	\label{AProp 1.5}
	If a bounded sequence $\left\{f_k\in L^{p_k(\cdot)}(\Omega)\right\}_{k\in \mathbb{N}}$ converges weakly in $L^{\alpha}(\Omega)$  to $f$ for some $\alpha>1$, then $f\in L^{p(\cdot)}(\Omega)$, $f_k\rightharpoonup f$ in variable $L^{p_k(\cdot)}(\Omega)$, and
	\begin{equation}
	\label{A1.5}
	\liminf_{k\to\infty} \int_{\Omega} |f_k(x)|^{p_k(x)}\,dx\ge \int_{\Omega} |f(x)|^{p(x)}\,dx.
	\end{equation}
\end{proposition}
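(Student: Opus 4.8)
The plan is to establish \eqref{A1.5} by a duality argument based on the Fenchel--Young inequality, and to deduce the remaining conclusions from it. The weak convergence in the variable space is the easy half: by hypothesis $f_k\rightharpoonup f$ in $L^\alpha(\Omega)$ with $\alpha>1$, and every $\varphi\in C_0^\infty(\mathbb{R}^N)$ restricts to an element of $L^{\alpha'}(\Omega)$, so $\int_\Omega f_k\varphi\,dx\to\int_\Omega f\varphi\,dx$, which is precisely \eqref{A1.3.b} in Definition~\ref{ADef 1.3}. It therefore remains to prove the modular inequality \eqref{A1.5}; the inclusion $f\in L^{p(\cdot)}(\Omega)$ will then be automatic, since the right-hand side of \eqref{A1.5} is finite by the boundedness assumption \eqref{A1.3.a}, forcing $\int_\Omega|f|^{p(x)}\,dx<\infty$.

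The central point is that one must keep the exponent matched to the function on which control is available. A naive Mazur/convexity argument run with the limit exponent $p$ fails, because on the set where $p_k<p$ the quantity $\int_\Omega|f_k|^{p(x)}\,dx$ need not be bounded by the controlled quantity $\int_\Omega|f_k|^{p_k(x)}\,dx$. I would instead use the Fenchel conjugate of the integrand. For each $s\in(1,\infty)$ the convex function $t\mapsto|t|^{s}$ has conjugate $(|\cdot|^{s})^\ast(\tau)=c(s)|\tau|^{s'}$ with $s'=s/(s-1)$ and $c(s)=(s-1)\,s^{-s/(s-1)}$ continuous in $s$; hence the Fenchel--Young inequality gives, pointwise a.e. in $\Omega$ and for every $k$ and every test function $g$,
\[
f_k(x)\,g(x)\le |f_k(x)|^{p_k(x)}+c\big(p_k(x)\big)\,|g(x)|^{p_k^\prime(x)}.
\]
Integrating and rearranging, for each fixed $g\in L^\infty(\Omega)$,
\[
\int_\Omega f_k g\,dx-\int_\Omega c\big(p_k(x)\big)\,|g|^{p_k^\prime(x)}\,dx\le\int_\Omega|f_k|^{p_k(x)}\,dx.
\]

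Next I would pass to the limit in this inequality for fixed $g$. The first term on the left converges to $\int_\Omega fg\,dx$ by weak $L^\alpha$-convergence (as $g\in L^{\alpha'}$), and the second to $\int_\Omega c(p(x))|g|^{p'(x)}\,dx$ by dominated convergence: by \eqref{A1.5.0} we have $p_k\to p$ uniformly on $\overline\Omega$, hence $p_k'\to p'$ and $c(p_k(\cdot))\to c(p(\cdot))$ uniformly, while the uniform ellipticity $1<\alpha\le p_k\le\beta$ bounds $|g|^{p_k^\prime(x)}$ by the constant $1+\|g\|_{L^\infty}^{\alpha'}$ on the bounded set $\Omega$. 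Taking $\liminf_{k\to\infty}$ yields
\[
\int_\Omega fg\,dx-\int_\Omega c(p(x))\,|g|^{p'(x)}\,dx\le\liminf_{k\to\infty}\int_\Omega|f_k|^{p_k(x)}\,dx.
\]
Finally I would take the supremum over a rich family of bounded $g$; by the biconjugate formula for the normal convex integrand $(x,t)\mapsto|t|^{p(x)}$ (duality of convex integral functionals, in the spirit of Rockafellar), the left-hand supremum equals $\int_\Omega|f|^{p(x)}\,dx$, which is exactly \eqref{A1.5}.

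The main obstacle I anticipate is this last duality step, namely verifying that the supremum computed over bounded (or simple) $g$ reproduces the full modular $\int_\Omega|f|^{p(x)}\,dx$ and not a strictly smaller value. I would handle it by truncation and monotone convergence: approximate the pointwise maximizer of $\tau\mapsto f(x)\tau-c(p(x))|\tau|^{p'(x)}$ by bounded functions supported on the sets $\{|f|\le n\}$, whose integrals increase to $\int_\Omega|f|^{p(x)}\,dx$ by the monotone convergence theorem; each such truncated $g$ is admissible, so the supremum indeed recovers the modular in the limit. It is worth emphasizing that the only inputs needed beyond convexity are the uniform convergence and uniform ellipticity from \eqref{A1.5.0}, which are precisely what let the conjugate term pass to the limit, so no additional growth hypotheses on $f$ or the exponents are required.
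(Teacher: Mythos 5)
Your proof is correct and follows essentially the same route as the paper intends: the paper itself only cites Zhikov's Lemma~13.3 for this proposition, but Remark~\ref{ARem 1.5} reveals that the intended argument is exactly your Fenchel--Young/duality scheme, namely bounding $\int_\Omega f_k\varphi\,dx$ by the modular plus a conjugate term, passing to the limit via weak $L^\alpha$-convergence and the uniform convergence $p_k\to p$, and then recovering $\int_\Omega|f|^{p(x)}\,dx$ by taking a supremum over test functions. Your handling of the biconjugation step by truncating the pointwise maximizer $g=p\,|f|^{p-1}\mathrm{sign}(f)$ on $\{|f|\le n\}$ and invoking monotone convergence is a sound way to close the one genuinely delicate point.
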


\begin{remark}
	\label{ARem 1.5}
	Arguing in a similar manner and using, instead of (\ref{A1.5.2}), the estimate
	\begin{equation*}
	\liminf_{k\to\infty} \int_{\Omega} \frac{1}{p_k(x)}|f_k(x)|^{p_k(x)}\,dx\\
	\ge
	\int_{\Omega}  f(x)\varphi(x)\,dx -
	\int_{\Omega}\frac{1}{p_k^{\prime}(x)} |\varphi(x)|^{p^\prime(x)}\,dx,
	\end{equation*}
	it can be shown that the lower semicontinuity property (\ref{A1.5}) can be generalized as follows
	\begin{equation}
	\label{A1.5.new}
	\liminf_{k\to\infty} \int_{\Omega} \frac{1}{p_k(x)}|f_k(x)|^{p_k(x)}\,dx\ge \int_{\Omega} \frac{1}{p(x)}|f(x)|^{p(x)}\,dx.
	\end{equation}
\end{remark}

The following result can be viewed as an analogous of the H\"{o}lder inequality in Lebesgue spaces with variable exponents (for the details we refer to \cite{Cruz, DHHR}).
\begin{proposition}
	\label{AProp 1.2}
	If $f\in L^{p(\cdot)}(\Omega)^N$ and $g\in L^{p^\prime(\cdot)}(\Omega)^N$, then $\left(f,g\right)\in L^1(\Omega)$ and
	\begin{equation}
	\label{AKog_1.2.1}
	\int_\Omega \left(f,g\right)\,dx\le 2\|f\|_{L^{p(\cdot)}(\Omega)^N} \|g\|_{L^{p^\prime(\cdot)}(\Omega)^N}.
	\end{equation}
\end{proposition}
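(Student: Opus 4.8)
The plan is to reduce the vector-valued statement to the scalar variable-exponent H\"{o}lder inequality and then to establish the latter by a normalization argument combined with the pointwise Young inequality. First I would remove the vector structure. By the Cauchy--Schwarz inequality in $\mathbb{R}^N$ one has $\left|\left(f(x),g(x)\right)\right|\le |f(x)|\,|g(x)|$ a.e.\ in $\Omega$, where $|\cdot|$ denotes the Euclidean norm, while by the convention for the vector-valued norm the nonnegative scalar functions $\widetilde f:=|f|$ and $\widetilde g:=|g|$ satisfy $\|\widetilde f\|_{L^{p(\cdot)}(\Omega)}=\|f\|_{L^{p(\cdot)}(\Omega)^N}$ and $\|\widetilde g\|_{L^{p^\prime(\cdot)}(\Omega)}=\|g\|_{L^{p^\prime(\cdot)}(\Omega)^N}$. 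Hence it suffices to prove the scalar estimate $\int_\Omega \widetilde f\,\widetilde g\,dx\le 2\|\widetilde f\|_{L^{p(\cdot)}(\Omega)}\|\widetilde g\|_{L^{p^\prime(\cdot)}(\Omega)}$.

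For the scalar inequality I would first dispose of the degenerate case: if $\|\widetilde f\|_{L^{p(\cdot)}(\Omega)}=0$ or $\|\widetilde g\|_{L^{p^\prime(\cdot)}(\Omega)}=0$, then the corresponding function vanishes a.e.\ and both sides are zero. Assuming both norms strictly positive, set $\lambda:=\|\widetilde f\|_{L^{p(\cdot)}(\Omega)}>0$ and $\mu:=\|\widetilde g\|_{L^{p^\prime(\cdot)}(\Omega)}>0$. The key input is Proposition~\ref{Kog_Prop A0.8}: by \eqref{AKog_1.1} the infimum defining the Luxemburg norm is attained and the associated modulars are normalized to one, so that
\[
\int_\Omega\left(\frac{\widetilde f(x)}{\lambda}\right)^{p(x)}\,dx=1\qquad\text{and}\qquad \int_\Omega\left(\frac{\widetilde g(x)}{\mu}\right)^{p^\prime(x)}\,dx=1.
\]

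Next I would invoke the pointwise Young inequality $ab\le \frac{a^{p(x)}}{p(x)}+\frac{b^{p^\prime(x)}}{p^\prime(x)}$, valid a.e.\ because $1<\alpha\le p(x)\le\beta<\infty$, applied with $a=\widetilde f(x)/\lambda$ and $b=\widetilde g(x)/\mu$. Since $1/p(x)\le 1$ and $1/p^\prime(x)\le 1$ a.e., integrating over $\Omega$ and using the two normalizations above gives
\[
\frac{1}{\lambda\mu}\int_\Omega \widetilde f\,\widetilde g\,dx\le \int_\Omega\left(\frac{\widetilde f}{\lambda}\right)^{p(x)}dx+\int_\Omega\left(\frac{\widetilde g}{\mu}\right)^{p^\prime(x)}dx=2,
\]
whence $\int_\Omega \widetilde f\,\widetilde g\,dx\le 2\lambda\mu$. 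Combined with the pointwise Cauchy--Schwarz bound this yields $\int_\Omega\left|\left(f,g\right)\right|\,dx\le 2\|f\|_{L^{p(\cdot)}(\Omega)^N}\|g\|_{L^{p^\prime(\cdot)}(\Omega)^N}<+\infty$, which simultaneously establishes the integrability $\left(f,g\right)\in L^1(\Omega)$ and the estimate \eqref{AKog_1.2.1}.

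I do not expect a genuine obstacle here, since this is precisely the standard variable-exponent H\"{o}lder inequality; the only points deserving attention are the reduction to the scalars $\widetilde f,\widetilde g$ (which relies on the Euclidean definition of the vector norm) and the appearance of the constant $2$. The latter is not sharp: it stems solely from replacing the factors $1/p(x)$ and $1/p^\prime(x)$ by $1$ in the Young estimate, together with the Luxemburg normalization. A finer constant (for instance one of the form $1/\alpha+1/\beta^\prime$) would serve equally well, but the value $2$ is entirely sufficient for the subsequent estimates in the paper.
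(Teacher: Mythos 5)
Your proof is correct. The paper itself gives no proof of Proposition~\ref{AProp 1.2} --- it simply refers the reader to \cite{Cruz, DHHR} --- and your argument is precisely the standard one found there: reduce to scalars via Cauchy--Schwarz in $\mathbb{R}^N$, normalize using the attainment property \eqref{AKog_1.1} of the Luxemburg norm so that both modulars equal one, and apply the pointwise Young inequality, with the constant $2$ arising from discarding the factors $1/p(x)$ and $1/p^\prime(x)$. Your closing remark about the non-sharpness of the constant is also accurate and consistent with the refined constants given in the cited references.
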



\section{Sobolev Spaces with Variable Exponent}
We recall here well-known facts concerning the Sobolev spaces with variable exponent. Let $p(\cdot)$ be a measurable exponent function on $\Omega$ such that
$1<\alpha\le p(x)\le\beta<\infty$ a.e. in $\Omega$, where $\alpha$ and $\beta$ are given constants. We associate with it the so-called Sobolev-Orlicz space
\begin{equation}
\label{B0.12}
W^{1,p(\cdot)}(\Omega):=\left\{u\in W^{1,1}(\Omega):\int_\Omega \left[|u(x)|^{p(x)}+ |\nabla u(x)|^{p(x)}\right]\, dx<\infty\right\}
\end{equation}
and equip it with the norm $\|u\|_{W^{1,p(\cdot)}(\Omega)}=\|u\|_{L^{p(\cdot)}(\Omega)}+\|\nabla u\|_{L^{p(\cdot)}(\Omega;\mathbb{R}^N)}$.

It is well-known that, in general, unlike classical Sobolev spaces, smooth functions are not necessarily dense in $W=W^{1,p(\cdot)}_0(\Omega)$. Hence, with variable exponent $p=p(x)$ ($1<\alpha\le p\le\beta$) we can associate another Sobolev space,
\[
H=H^{1,p(\cdot)}(\Omega)\ \mbox{as  the  closure  of  the  set} \ C^\infty(\overline{\Omega})\ \mbox{in} \ W^{1,p(\cdot)}(\Omega)\mbox{-norm}.
\]
Since the identity $W=H$ is not always valid, it makes sense to say that an exponent $p(x)$ is regular if $C^\infty(\overline{\Omega})$ is dense in $W^{1,p(\cdot)}(\Omega)$.

The following result reveals the important property that guarantees the regularity of exponent $p(x)$.
\begin{proposition}
	\label{BProp 1.1}
	Assume that there exists $\delta\in (0,1]$ such that $p\in C^{0,\delta}(\overline{\Omega})$. Then the set $C^\infty(\overline{\Omega})$ is dense in $W^{1,p(\cdot)}(\Omega)$, and, therefore, $W=H$.
\end{proposition}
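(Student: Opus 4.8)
The plan is to reduce the global density statement to two facts about the variable Orlicz space---that convolution with a mollifier is a bounded, approximate-identity operation on $L^{p(\cdot)}(\Omega)$, and that translation is continuous in $L^{p(\cdot)}(\Omega)$---both of which hold precisely because the H\"older hypothesis $p\in C^{0,\delta}(\overline{\Omega})$ forces a log-H\"older modulus of continuity. First I would extend $p$ from $\overline{\Omega}$ to all of $\mathbb{R}^N$ without increasing its H\"older seminorm (a McShane-type extension, truncated to remain in $[\alpha,\beta]$, suffices since $\overline{\Omega}$ is compact), and record that H\"older continuity implies
\[
|p(x)-p(y)|\le \frac{C}{-\log|x-y|}\qquad\text{for }\ |x-y|\le \tfrac12,
\]
because $|x-y|^\delta(-\log|x-y|)\to 0$ as $|x-y|\to 0$. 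This log-H\"older condition is exactly the hypothesis under which the Hardy--Littlewood maximal operator $\mathcal{M}$ is bounded on $L^{p(\cdot)}$ and, as a consequence (see \cite[Ch.~4]{DHHR} and \cite{Cruz}), for any fixed mollifier $\rho\in C^\infty_c(\mathbb{R}^N)$ with $\int\rho=1$ and $\rho_\e(\cdot)=\e^{-N}\rho(\cdot/\e)$ one has the uniform bound $\|f\ast\rho_\e\|_{L^{p(\cdot)}}\le C\|f\|_{L^{p(\cdot)}}$ together with the convergence $f\ast\rho_\e\to f$ in $L^{p(\cdot)}(\mathbb{R}^N)$ as $\e\to 0$, for every $f\in L^{p(\cdot)}(\mathbb{R}^N)$.

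Next I would localize. Since $\partial\Omega$ is smooth, I choose a finite open cover $\{V_i\}_{i=0}^{n}$ of $\overline{\Omega}$ with $\overline{V_0}\subset\Omega$ and each $V_i$ ($i\ge 1$) a boundary chart in which $\Omega\cap V_i$ is the subgraph of a smooth function, together with a subordinate smooth partition of unity $\{\varphi_i\}$. By the product rule $\nabla(\varphi_i u)=\varphi_i\nabla u+u\nabla\varphi_i$ and the fact that multiplication by a bounded smooth function preserves $L^{p(\cdot)}(\Omega)$, each $\varphi_i u$ lies in $W^{1,p(\cdot)}(\Omega)$, so it is enough to approximate every $\varphi_i u$ separately in the $W^{1,p(\cdot)}$-norm. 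For the interior piece $\varphi_0 u$, whose support is a compact subset of $\Omega$, the mollifications $(\varphi_0 u)\ast\rho_\e$ are smooth and supported in $\Omega$ for $\e$ small, and by the paragraph above they converge to $\varphi_0 u$ in $L^{p(\cdot)}$ together with their gradients $(\nabla(\varphi_0 u))\ast\rho_\e$, hence in $W^{1,p(\cdot)}(\Omega)$.

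The delicate part is the boundary pieces $\varphi_i u$, $i\ge 1$: their mollifications need not be supported in $\Omega$, so direct convolution is inadmissible. Here I would use the standard translate-then-mollify device, which the boundary smoothness makes available: after straightening the chart so that the inner normal points in the $+x_N$ direction, set $(\varphi_i u)_\tau(x):=(\varphi_i u)(x-\tau e_N)$, a function whose support is pushed a definite distance into $\Omega$. Continuity of translation in $L^{p(\cdot)}$---which again rests on the log-H\"older regularity of $p$---gives $(\varphi_i u)_\tau\to\varphi_i u$ and $\nabla(\varphi_i u)_\tau\to\nabla(\varphi_i u)$ in $L^{p(\cdot)}$ as $\tau\to 0$; then, for each fixed $\tau$, choosing the mollification scale $\e\ll\tau$ guarantees $(\varphi_i u)_\tau\ast\rho_\e\in C^\infty(\overline{\Omega})$ while $(\varphi_i u)_\tau\ast\rho_\e\to(\varphi_i u)_\tau$ in $W^{1,p(\cdot)}$ as $\e\to 0$. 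A diagonal choice $\e=\e(\tau)\to 0$ then produces a sequence in $C^\infty(\overline{\Omega})$ converging to $\varphi_i u$; summing the finitely many pieces and applying the triangle inequality yields a sequence of $C^\infty(\overline{\Omega})$ functions converging to $u$ in $W^{1,p(\cdot)}(\Omega)$, which is the asserted density and hence the identity $W=H$. I expect the main obstacle to be the two $L^{p(\cdot)}$-continuity statements for mollification and translation: unlike in the constant-exponent case, Young's inequality fails for the Luxemburg norm, so these cannot be taken for granted and must be extracted from the log-H\"older-driven maximal-function estimate, via the pointwise bound $|f\ast\rho_\e(x)|\le C\,(\mathcal{M}f)(x)$ combined with the norm--modular relations \eqref{A1.1b}--\eqref{A1.2}.
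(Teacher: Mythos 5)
Your opening reduction --- H\"older continuity implies log-H\"older continuity because $|t|^\delta\log(1/|t|)\to 0$ as $t\to 0$ --- is in fact the \emph{entire} content of the paper's proof: after recording exactly this estimate, the paper simply invokes Theorem~13.10 of \cite{Zh2011}, which asserts the density of $C^\infty(\overline{\Omega})$ in $W^{1,p(\cdot)}(\Omega)$ for log-H\"older exponents. You instead undertake to reprove that cited theorem, and your architecture (H\"older-preserving extension of $p$, maximal-function-based mollification estimates, partition of unity, translate-then-mollify in boundary charts) is the standard route to it.

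There is, however, one genuine gap: the pillar ``continuity of translation in $L^{p(\cdot)}$, which again rests on the log-H\"older regularity of $p$'' is false. Translation is one of the notorious pathologies of variable exponent spaces, on a par with the failure of Young's inequality that you do flag: by an observation going back to Kov\'a\v{c}ik and R\'akosn\'{\i}k, $L^{p(\cdot)}$ is translation invariant only when $p$ is constant, and log-H\"older (even Lipschitz) regularity does not rescue the statement you need. Concretely, for a non-constant exponent one can build a single $f\in L^{p(\cdot)}$ and shifts $h_k\to 0$ with $f(\cdot-h_k e_N)\notin L^{p(\cdot)}$: place at points $x^{(k)}$ singularities $|x-x^{(k)}|^{-N/q_k}$ with $p(x^{(k)})<q_k<p(x^{(k)}+h_k e_N)$, supported in balls of radius $r_k\ll h_k$; each summand has finite modular, but its $h_k$-translate has its singularity sitting where the exponent exceeds $q_k$, so the modular diverges. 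Regularity of $p$ only forces $q_k-p(x^{(k)})\to 0$; it does not prevent the construction. Hence ``$(\varphi_i u)_\tau\to\varphi_i u$ in $L^{p(\cdot)}$'' cannot be quoted as a general fact. The standard repair keeps your outline intact: never separate the shift from the mollification, but estimate the combined operator $u\mapsto \rho_\e\ast\bigl(u(\cdot-\tau e_N)\bigr)$ with comparable scales, say $c\tau\le\e\le\tau$ (any fixed power $\e\ge\tau^K$ also works). Then $|p(x)-p(y)|\le\omega(\e+\tau)$ with $\omega(t)=C/\log(1/t)$ whenever $y$ lies in the support of the shifted kernel centered at $x$, and Jensen's inequality against the probability measure $\rho_\e\,dy$ yields the modular bound $\int_\Omega|\rho_\e\ast u(\cdot-\tau e_N)|^{p(x)}\,dx\le C\int|u|^{p(y)}\,dy+C$, precisely because $\e^{-N\omega(\e+\tau)}\le C$ under log-H\"older continuity; norm convergence then follows from a.e.\ convergence plus domination by $C\,\mathcal{M}u$, exactly as in your interior case. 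A second, minor point: boundedness of $\mathcal{M}$ on $L^{p(\cdot)}(\mathbb{R}^N)$ also requires log-H\"older decay at infinity, so you should arrange your McShane extension to be constant outside a large ball containing $\overline{\Omega}$. With these two corrections your argument becomes a complete, self-contained substitute for the citation on which the paper's proof rests.
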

\begin{proof}
	Let $p\in C^{0,\delta}(\overline{\Omega})$ be a given exponent. Since
	\begin{equation}
	\label{B1.5a}
	\lim\limits_{t\to 0} |t|^\delta \log(|t|)=0\quad\ with \ \delta\in(0,1],
	\end{equation}
	it follows from the H\"{o}lder continuity of $p(\cdot)$ that
	\begin{equation}
	\nonumber
	|p(x)-p(y)|\le C|x-y|^\delta
	\le \left[\sup_{x,y\in\Omega} |x-y|^\delta\log(|x-y|^{-1})\right] \omega(|x-y|),\quad\forall\,x,y\in\Omega,
	\label{B1.6}
	\end{equation}
	where $\omega(t)=C/\log(|t|^{-1})$, and $C>0$ is some positive constant.
	
	Then property (\ref{B1.5a}) implies that $p(\cdot)$ is a log-H\"{o}lder continuous function. So, to deduce the density of $C^\infty(\overline{\Omega})$ in $W^{1,p(\cdot)}(\Omega)$ it is enough to refer to Theorem 13.10 in \cite{Zh2011}.
\end{proof}

\end{document}